\newtheorem{thm}{Theorem}[section]
\newtheorem{lem}[thm]{Lemma}
\newtheorem{cor}[thm]{Corollary}
\newenvironment {proof} {\noindent{\em Proof.}}{\hspace*{\fill}$\Box$\par\vspace{4mm}}
\title{The $(k,\ell)$-proper index of graphs\footnote{Supported by NSFC No.11371205, 11531011, ``973" program No.2013CB834204 and PCSIRT.}}
\author{Hong Chang$^1$, Xueliang Li$^1$, Colton Magnant$^2$, Zhongmei Qin$^1$\\
{\small $^1$Center for Combinatorics and LPMC}\\
{\small Nankai University, Tianjin 300071, P.R. China}\\
{\small Email: changh@mail.nankai.edu.cn, lxl@nankai.edu.cn, qinzhongmei90@163.com}\\
{\small $^2$Department of Mathematical Sciences}\\
{\small Georgia Southern University, Statesboro, GA 30460-8093, USA}\\
{\small Email: cmagnant@georgiasouthern.edu}
}
\date{}
\begin{document}
\maketitle
\begin{abstract}
A tree $T$ in an edge-colored graph is called a {\it proper tree} if no two adjacent edges of $T$ receive the same color. Let $G$ be a connected graph of order $n$ and $k$ be an integer with $2\leq k \leq n$. For $S\subseteq V(G)$ and $|S| \ge 2$, an $S$-tree is a tree containing the vertices of $S$ in $G$.  Suppose $\{T_1,T_2,\ldots,T_\ell\}$ is a set of $S$-trees, they are called \emph{internally disjoint} if $E(T_i)\cap E(T_j)=\emptyset$ and $V(T_i)\cap V(T_j)=S$ for $1\leq i\neq j\leq \ell$. For a set $S$ of $k$ vertices of $G$, the maximum number of internally disjoint $S$-trees in $G$ is denoted by $\kappa(S)$. The $\kappa$-connectivity $\kappa_k(G)$ of $G$ is defined by $\kappa_k(G)=\min\{\kappa(S)\mid S$ is a $k$-subset of $V(G)\}$. For a connected graph $G$ of order $n$ and for two integers $k$ and $\ell$ with $2\le k\le n$ and $1\leq \ell \leq \kappa_k(G)$, the \emph{$(k,\ell)$-proper index $px_{k,\ell}(G)$} of $G$ is the minimum number of colors that are needed in an edge-coloring of $G$ such that for every $k$-subset $S$ of $V(G)$, there exist $\ell$ internally disjoint proper $S$-trees connecting them. In this paper, we show that for every pair of positive integers $k$ and $\ell$ with $k \ge 3$, there exists a positive integer $N_1=N_1(k,\ell)$ such that $px_{k,\ell}(K_n) = 2$ for every integer $n \ge N_1$, and also there exists a positive integer $N_2=N_2(k,\ell)$ such that $px_{k,\ell}(K_{m,n}) = 2$ for every integer $n \ge N_2$ and $m=O(n^r) (r \ge 1)$. In addition, we show that for every $p \ge c\sqrt[k]{\frac{\log_a n}{n}}$ ($c \ge 5$),  $px_{k,\ell}(G_{n,p})\le 2$ holds almost surely, where $G_{n,p}$ is the Erd\"{o}s-R\'{e}nyi random graph model. \\[2mm]
\textbf{Keywords:} proper tree; proper index; random graphs; threshold function.\\
\textbf{AMS subject classification 2010:} 05C15, 05C40, 05C80, 05D40.\\
\end{abstract}

\section{Introduction}

All graphs in this paper are undirected, finite and simple. We follow \cite{BM} for graph theoretical notation and terminology not described here. Let $G$ be a nontrivial connected graph with an associated {\it edge-coloring} $c : E(G)\rightarrow \{1, 2, \ldots, r\}$, $r \in \mathbb{N}$, where adjacent edges may have the same color. If adjacent edges of $G$ are assigned different colors by $c$, then $c$ is a {\it proper coloring}. The minimum number of colors needed in a proper coloring of $G$ is referred to as the \emph{chromatic index} of $G$ and denoted by $\chi'(G)$. A path of $G$ is said to be a {\it rainbow path} if no two edges on the path receive the same color. The graph $G$ is called {\it rainbow connected} if for every pair of distinct vertices there is a rainbow path of $G$ connecting them. An edge-coloring of a connected graph is a {\it rainbow connecting coloring} if it makes the graph rainbow connected. This concept of rainbow connection of graphs was introduced by Chartrand et al.~\cite{CJMZ} in 2008. The \emph{rainbow connection number} $rc(G)$ of a connected graph $G$ is the smallest number of colors that are needed in order to make $G$ rainbow connected. The readers who are interested in this topic can see \cite{LSS,LS} for a survey.

In \cite{COZ}, Chartrand et al. generalized the concept of rainbow connection to rainbow index. At first, we recall the concept of generalized connectivity. Let $G$ be a connected graph of order $n$ and $k$ be an integer with $2\leq k \leq n$. For $S\subseteq V(G)$ and $|S| \ge 2$, an $S$-tree is a tree containing the vertices of $S$ in $G$.  Let $\{T_1,T_2,\ldots,T_\ell\}$ be a set of $S$-trees, they are called \emph{internally disjoint} if $E(T_i)\cap E(T_j)=\emptyset$ and $V(T_i)\cap V(T_j)=S$ for every pair of distinct integers $i,j$ with $1\leq i, j\leq \ell$. For a set $S$ of $k$ vertices of $G$, the maximum number of internally disjoint $S$-trees in $G$ is denoted by $\kappa(S)$. The $\kappa$-connectivity $\kappa_k(G)$ of $G$ is defined by $\kappa_k(G)=\min\{\kappa(S)\mid S$ is a $k$-subset of $V(G)\}$.  We refer to a book \cite{LM} for more details about the generalized connectivity.

A tree $T$ in an edge-colored graph is a {\it rainbow tree} if no two edges of $T$ have the same color. Let $G$ be a connected graph of order $n$ and let $k,\ell$ be two positive integers with $2\le k\le n$ and $1\leq \ell \leq \kappa_k(G)$. The \emph{$(k,\ell)$-rainbow index} of a connected graph $G$, denoted by $rx_{k,\ell}(G)$, is the minimum number of colors that are needed in an edge-coloring of $G$ such that for every $k$-subset $S$ of $V(G)$, there exist $\ell$ internally disjoint rainbow $S$-trees connecting them. Recently, a lot of relevant results have been published in \cite{CLS,CLS2,CLS3}. In particular, for $\ell=1$, we write $rx_k(G)$ for $rx_{k,1}(G)$ and call it the \emph{$k$-rainbow index} of $G$ ( see \cite{CLZ, QXY,CLYZ}).

Motivated by rainbow coloring and proper coloring in graphs, Andrews et al.~\cite{ALLZ} and Borozan et al.~\cite{BFGMMMT} introduced the concept of proper-path coloring. Let $G$ be a nontrivial connected graph with an edge-coloring. A path in $G$ is called a \emph{proper path} if no two adjacent edges of the path are colored with the same color. An edge-colored graph $G$ is {\it proper connected} if any two vertices of $G$ are connected by a proper path. For a connected graph $G$, the {\it proper connection number} of $G$, denoted by $pc(G)$, is defined as the smallest number of colors that are needed in order to make $G$ proper connected. The {\it $k$-proper connection number} of a connected graph $G$, denoted by $pc_k(G)$, is the minimum number of colors that are needed in an edge-coloring of $G$ such that for every two distinct vertices of $G$ are connected by $k$ internally pairwise vertex-disjoint proper paths. For more details, we refer to \cite{GLQ,LWY} and a dynamic survey \cite{LC}.

Recently, Chen et al. \cite{CLL} introduced the concept of $k$-proper index of a connected graph $G$. A tree $T$ in an edge-colored graph is a {\it proper tree} if no two adjacent edges of $T$ receive the same color. Let $G$ be a connected graph of order $n$ and $k$ be a fixed integer with $2\le k\le n$. An edge-coloring of $G$ is called a \emph{$k$-proper coloring} if for every $k$-subset $S$ of $V(G)$, there exists a proper $S$-tree in $G$. For a connected graph $G$, the \emph{$k$-proper index} of $G$, denoted by $px_k(G)$, is defined as the minimum number of colors that are needed in a $k$-proper coloring of $G$. In \cite{CLQ}, we gave some upper bounds for the 3-proper index of graphs.

A natural idea is to introduce the concept of the $(k,\ell)$-proper index. Let $G$ be a nontrivial connected graph of order $n$ and size $m$. Given two integers $k,\ell$ with $2\le k\le n$ and $1\leq \ell \leq \kappa_k(G)$, the \emph{$(k,\ell)$-proper index} of a connected graph $G$, denoted by $px_{k,\ell}(G)$, is the minimum number of colors that are needed in an edge-coloring of $G$ such that for every $k$-subset $S$ of $V(G)$, there exist $\ell$ internally disjoint proper $S$-trees connecting them. From the definition, it follows that $$1 \le px_{k,\ell}(G) \le \min\{rx_{k,\ell(G)}, \chi'(G) \}\le m.$$ Clearly, $px_{2,\ell}(G)=pc_\ell(G)$, $px_{k,1}(G)=px_k(G)$.

Let us give an overview of the rest of this paper. In Section 2, we study the $(k,\ell)$-proper index of complete graphs using two distinct methods. We show that there exists a positive integer $N_1=N_1(k,\ell)$, such that $px_{k,\ell}(K_n)=2$ for every integer $n\geq N_1$. In Section 3, we turn to investigate the $(k,\ell)$-proper index of complete bipartite graphs by probabilistic method \cite{AS}. Similarly, we prove that there exists a positive integer $N_2=N_2(k,\ell)$, such that $px_{k,\ell}(K_{n,n})=2$ for every integer $n\geq N_2$. Furthermore, we can extend the result about $K_{n,n}$ to more general complete bipartite graph $K_{m,n}$, where $m=O(n^r)$, $r\in\mathbb{R}$ and $r\geq 1$. In section 4, we show that for every $p \ge c\sqrt[k]{\frac{\log_a n}{n}}$ ($c \ge 5$), $px_{k,\ell}(G_{n,p})\le 2$ holds almost surely, where $G_{n,p}$ is the Erd\"{o}s-R\'{e}nyi random graph model \cite{ER}.

\section{Complete graphs}

In this section, we will investigate the $(k,\ell)$-proper index of complete graphs. Firstly, we state a useful result about the $k$-connectivity of $K_n$ and present some preliminary results.

\begin{thm}\label{thm0}\cite{COZ}
For every two integers $n$ and $k$ with $2 \leq k \leq n$, $\kappa_k(K_n)=n-\lceil\frac{k}{2}\rceil$.
\end{thm}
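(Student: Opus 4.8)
The plan is to prove Theorem~\ref{thm0}, i.e.\ that $\kappa_k(K_n) = n - \lceil k/2 \rceil$, by establishing the matching upper and lower bounds. For the upper bound, I would fix a $k$-subset $S = \{v_1, \dots, v_k\}$ of $V(K_n)$ and exhibit a set $S$ whose $\kappa(S)$ is small. Any $S$-tree that is not a single edge inside $S$ must use at least one vertex outside $S$ or at least one edge joining two vertices of $S$; since internally disjoint $S$-trees share only the vertices of $S$, we can afford to use each of the $n-k$ vertices outside $S$ in at most one tree and each of the $\binom{k}{2}$ edges inside $S$ in at most one tree. A careful accounting — a tree spanning $S$ needs at least $k-1$ edges, and a tree using $t$ outside vertices and $j$ inside-$S$ edges "costs" roughly $t$ outside vertices while spanning at most $t + j$ extra adjacencies — leads to the bound $\kappa(S) \le n - \lceil k/2 \rceil$. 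The cleanest way to see the upper bound is to count: in $\ell$ internally disjoint $S$-trees there are at least $\ell(k-1)$ edges, each edge has at most one endpoint outside $S$ unless both endpoints lie in $S$, and only $\binom{k}{2}$ edges can have both endpoints in $S$ (distributed among the trees); pushing this inequality through gives $\ell \le n - \lceil k/2\rceil$.

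For the lower bound I would construct $n - \lceil k/2 \rceil$ explicit internally disjoint $S$-trees. Partition $S$ into $\lceil k/2 \rceil$ pairs (with one singleton if $k$ is odd), say $\{v_1, v_2\}, \{v_3, v_4\}, \dots$. Using the $n-k$ vertices outside $S$, build $n-k$ spiders (stars): for each outside vertex $u$, take the star joining $u$ to all $k$ vertices of $S$ — these $n-k$ stars are pairwise internally disjoint and each is a proper... well, here just an $S$-tree. That accounts for $n-k$ trees. To get the remaining $\lceil k/2 \rceil - (k - \lceil k/2\rceil) $... more directly: we still need $\lceil k/2\rceil$ additional trees built using only edges inside $S$ together with possibly one outside vertex each — but the outside vertices are used up, so these must be trees on $S$ alone using only $\binom{k}{2}$ edges of $K[S]$. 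Since $K_k$ has a decomposition-type structure, one can find $\lceil k/2 \rceil$ edge-disjoint spanning trees of $K_k$ when $k \le 2\lceil k/2\rceil$, which always holds; more precisely $K_k$ contains $\lfloor k/2 \rfloor$ edge-disjoint Hamiltonian paths (a classical result), and these are spanning trees of $S$, internally disjoint from each other and from the stars. Combining $n-k$ stars with $\lceil k/2\rceil$ (or $\lfloor k/2 \rfloor$, adjusting the parity bookkeeping) spanning trees of $K[S]$ yields $n - k + \lceil k/2 \rceil = n - \lfloor k/2 \rfloor \ge n - \lceil k/2\rceil$ trees.

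The main obstacle will be the parity bookkeeping and making the two bounds meet exactly at $n - \lceil k/2 \rceil$. When $k$ is even, $K_k$ decomposes into $k/2$ Hamiltonian paths precisely when $k$ is even (Walecki-type decomposition), and $n-k + k/2 = n - k/2$; when $k$ is odd, $K_k$ decomposes into $(k-1)/2$ Hamiltonian cycles, from each of which we can extract a Hamiltonian path, giving $(k-1)/2$ spanning trees of $S$, and $n - k + (k-1)/2 = n - (k+1)/2 = n - \lceil k/2 \rceil$. So the construction naturally gives the right answer once we are careful about whether $k$ is even or odd. The subtle point on the upper-bound side is verifying that no clever tree can "save" an outside vertex by branching — this is where the edge-counting argument, rather than a vertex-counting argument, is essential, since a tree on $j$ vertices has exactly $j-1$ edges and one must argue that each internally disjoint tree consumes its fair share of the $n - k$ private outside vertices plus its fair share of the $\binom{k}{2}$ shared inside edges. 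I would phrase this as: if $T_1, \dots, T_\ell$ are internally disjoint $S$-trees, sum $\sum_i (|V(T_i)| - 1) = \sum_i |E(T_i)|$, bound the left side below by $\ell(k-1)$, bound $\sum_i |V(T_i) \setminus S| \le n - k$, and note each $T_i$ with $|V(T_i) \setminus S| = t_i$ has $|E(T_i)| \le \binom{k}{2}\text{-share} + (\text{edges incident to its } t_i \text{ private vertices})$, then optimize.
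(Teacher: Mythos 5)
The paper does not prove this statement at all: Theorem~\ref{thm0} is quoted from the reference \cite{COZ} (Chartrand, Okamoto and Zhang), so there is no in-paper proof to compare against. Judged on its own, your proposal is essentially right. The lower bound is complete and correct once you settle, as you do in your last paragraph, on $n-k$ stars centred at the outside vertices together with $\lfloor k/2\rfloor$ edge-disjoint Hamiltonian paths of $K[S]$ (for $k$ even these come from the Walecki path decomposition; for $k$ odd, from the $(k-1)/2$ Hamiltonian cycles with one edge deleted from each), giving $n-k+\lfloor k/2\rfloor = n-\lceil k/2\rceil$ pairwise internally disjoint $S$-trees. Your earlier line ``$n-k+\lceil k/2\rceil$'' is a slip ($K_k$ cannot contain $\lceil k/2\rceil$ edge-disjoint spanning trees when $k$ is odd), but you correct it.

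The upper bound is where you should firm things up: the ``optimize'' step as written does not quite close. The clean way to finish is not a single global edge sum but the two-resource split that your prose already hints at. Partition the internally disjoint $S$-trees into those with $V(T_i)=S$ and those with $V(T_i)\setminus S\neq\emptyset$. A tree of the first kind is a spanning tree of $K[S]$, hence uses $k-1$ of the $\binom{k}{2}$ edges inside $S$, and these edge sets are pairwise disjoint, so there are at most $\bigl\lfloor \binom{k}{2}/(k-1)\bigr\rfloor=\lfloor k/2\rfloor$ of them. A tree of the second kind owns at least one private vertex outside $S$, so there are at most $n-k$ of them. Hence $\ell\le (n-k)+\lfloor k/2\rfloor=n-\lceil k/2\rceil$. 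If you insist on the global count $\sum_i|E(T_i)|=\ell(k-1)+\sum_i t_i$, you must still observe that edges incident to outside vertices occur only in trees with $t_i\ge 1$ and that there are at most $n-k$ such trees; otherwise the inequality only yields $\ell\le n-k/2$, which for odd $k$ needs the integrality of $\ell$ to recover $n-\lceil k/2\rceil$. Either route works, but the case split should be made explicit rather than left as ``then optimize.''
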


\begin{thm}\cite{CLL}
Let $G=K_n$ and $k$ be an integer with $3 \le k \le n$. Then $px_{k,1}(G)=px_k(G)=2$.
\end{thm}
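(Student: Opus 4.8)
The plan is to establish the two bounds $px_k(K_n)\ge 2$ and $px_k(K_n)\le 2$. The lower bound is immediate: under any edge-coloring using a single colour, every subtree on at least three vertices contains two adjacent edges coloured alike, so it is improper; since $k\ge 3$, no $k$-subset $S$ has a proper $S$-tree, whence $px_k(K_n)\ge 2$.

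For the upper bound I would first record the guiding observation that, in an edge-coloring with only two colours, a proper tree is necessarily a path: at an internal vertex all incident tree-edges get distinct colours, so every internal vertex has degree at most $2$, and a tree of maximum degree at most $2$ is a path. Hence it suffices to produce a $2$-colouring of $K_n$ in which every $k$-subset $S$ is contained in some alternately coloured path. I would choose a matching $M$ of $K_n$ that is perfect if $n$ is even and near-perfect (missing a single vertex $w$) if $n$ is odd, colour the edges of $M$ with colour $2$, and colour every other edge with colour $1$.

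Now fix a $k$-subset $S$. Let $B_1,B_2,\dots,B_t$ be the distinct edges of $M$ that are incident with a vertex of $S$ — if $n$ is odd and $w\in S$, apply this to $S\setminus\{w\}$ instead — and note $t\le\lfloor n/2\rfloor$, so $B_1,\dots,B_t$ together span at most $n$ vertices. Build the walk that runs along $B_1$, then follows a colour-$1$ edge from an endpoint of $B_1$ to an endpoint of $B_2$, then along $B_2$, then a colour-$1$ edge to $B_3$, and so on, finishing along $B_t$; when $w\in S$, additionally prepend a colour-$1$ edge from $w$ to an endpoint of $B_1$, so that $w$ is an endpoint of the whole walk. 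Because the $B_i$ are pairwise vertex-disjoint edges of $M$, this walk is in fact a path; because each linking edge joins two different matching edges, it lies outside $M$ and therefore has colour $1$; consequently the colours alternate $\dots,1,2,1,2,\dots$ along the path and each internal vertex is incident with exactly one colour-$1$ and one colour-$2$ edge. Thus the path is a proper $S$-tree, giving $px_k(K_n)\le 2$ and hence $px_{k,1}(K_n)=px_k(K_n)=2$.

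I do not expect a serious obstacle here, since $\ell=1$ removes any need for internal-disjointness arguments; the only points that must be handled with care are the parity bookkeeping when $n$ is odd — making sure the unmatched vertex, when it lies in $S$, is placed at an end of the path rather than in its interior, where it would meet two colour-$1$ edges — and checking that the constructed walk genuinely avoids repeated vertices, which follows from the disjointness of distinct edges of $M$ together with the bound $t\le\lfloor n/2\rfloor$.
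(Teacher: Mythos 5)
Your argument is correct. Note first that the paper does not prove this statement at all: it is quoted from the reference [CLL] (Chen, Li, Liu, \emph{The $k$-proper index of graphs}), so there is no in-paper proof to compare against. Judged on its own terms, your proof is sound: the lower bound is the right triviality (with one colour and $k\ge 3$ every $S$-tree has two adjacent equally coloured edges), the observation that a $2$-coloured proper tree is a path is exactly the reduction the paper itself uses repeatedly, and your matching construction does produce, for each $k$-set $S$, an alternating path of the form $2,1,2,1,\dots$ whose internal vertices each see one edge of $M$ and one edge outside $M$; the parity case with the unmatched vertex $w$ placed at an end is handled correctly, and disjointness of the matching edges guarantees the walk is a genuine path.

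The one remark worth making is that your construction is more elaborate than necessary for $\ell=1$. Since a proper $S$-tree need only \emph{contain} $S$, a single spanning proper path suffices for every $k$-subset simultaneously: take any Hamiltonian path of $K_n$, colour its edges alternately $1,2,1,2,\dots$, and colour all remaining edges arbitrarily. This is precisely the mechanism behind the paper's Theorem 2.6 (the Hamiltonian path/cycle factorisation of $K_n$), specialised to one tree. Your per-$S$ matching path buys nothing extra here, though it is the kind of local construction that becomes useful when one needs several internally disjoint trees, as in the later theorems of the paper.
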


\begin{thm}\label{thm1}\cite{BFGMMMT}
Let $G=K_n$, $n\geq4$ and $\ell\ge 2$. If $n\geq2\ell$, then $px_{2,\ell}(G)=pc_\ell(G)=2$.
\end{thm}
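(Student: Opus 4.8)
\medskip
\noindent\emph{Proof proposal.}
The plan is to prove $px_{2,\ell}(K_n)\ge 2$ and $px_{2,\ell}(K_n)\le 2$ separately; the first inequality is immediate and the second follows from a single explicit $2$-edge-coloring.

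For the lower bound, I would observe that under any edge-coloring with one color every proper path has length at most $1$, since two consecutive edges would repeat that color; hence the only proper path between two vertices $u$ and $v$ is the edge $uv$, and $\ell\ge 2$ internally disjoint proper $\{u,v\}$-paths cannot coexist. So at least two colors are required, and (since $px_{2,\ell}=pc_\ell$) the same bound holds for $pc_\ell(K_n)$.

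For the upper bound I would fix a matching $M$ of $K_n$ of size $\lfloor n/2\rfloor$ (a perfect matching when $n$ is even), color the edges of $M$ with color $2$ and all other edges with color $1$, and write $u'$ for the $M$-partner of a matched vertex $u$. The key point is that for every edge $xy\in M$ disjoint from $\{u,v\}$ the path $u\,x\,y\,v$ is proper: $ux\notin M$ and $yv\notin M$, so its color pattern is $1,2,1$. Moreover, if $uv\notin M$ then the edges of $M$ meeting $\{u,v\}$ are $uu'$ and $vv'$, and they still yield the proper paths $u\,u'\,v$ and $u\,v'\,v$, with patterns $2,1$ and $1,2$. Together with the one-edge path $uv$, and since at most two edges of $M$ meet $\{u,v\}$ (exactly one, namely $uv$ itself, when $uv\in M$), this exhibits at least $\lfloor n/2\rfloor$ proper $\{u,v\}$-paths whose internal vertex sets are empty, a single vertex, or one of the pairs spanned by the chosen edges of $M$. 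These sets are pairwise disjoint, so the paths are internally disjoint, and $n\ge 2\ell$ gives $\lfloor n/2\rfloor\ge\ell$; hence $px_{2,\ell}(K_n)\le 2$.

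The only step needing real care is the tally in the extremal range $n\approx 2\ell$, together with the parity of $n$. The worst case is $uv\in M$, where the edge $uv$ is consumed as the direct path; but $\lfloor n/2\rfloor-1$ edges of $M$ still avoid $\{u,v\}$, so the count $1+(\lfloor n/2\rfloor-1)=\lfloor n/2\rfloor\ge\ell$ holds, and for odd $n$ the hypothesis $n\ge 2\ell$ already forces $n\ge 2\ell+1$, so the single unmatched vertex costs nothing. I expect no obstacle beyond this routine case check.
\medskip
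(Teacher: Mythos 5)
Your argument is correct, but note that the paper offers no proof of this statement at all: Theorem~\ref{thm1} is quoted from \cite{BFGMMMT}, so there is nothing internal to compare against. Your lower bound is the standard observation, and your upper bound via a maximum matching checks out: in the worst case $uv\in M$ you get the monochromatic edge $uv$ plus $\lfloor n/2\rfloor-1$ paths of pattern $1,2,1$ through the remaining matching edges, and when $uv\notin M$ you get at least $\lfloor n/2\rfloor+1$ paths (the edge $uv$, the two paths $u\,u'\,v$ and $u\,v'\,v$ through the partners of $u$ and $v$, and the $1,2,1$ paths through the matching edges avoiding $\{u,v\}$); all internal vertex sets are pairwise disjoint because $M$ is a matching, and the paths are pairwise edge-disjoint, so $\lfloor n/2\rfloor\geq\ell$ suffices. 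This is a genuinely different, and arguably more elementary, route than the one used in \cite{BFGMMMT} and echoed elsewhere in this paper (Theorems 2.5 and 2.6), where the $2$-coloring comes from decomposing $K_n$ into $\lfloor n/2\rfloor$ edge-disjoint Hamiltonian paths (or $\frac{n-1}{2}$ Hamiltonian cycles) and alternating two colors along each; the decomposition approach produces spanning proper paths and therefore generalizes directly to $k$-subsets with $k>2$, whereas your matching construction is tailored to pairs but reaches the same sharp threshold $n\geq 2\ell$ with much less machinery.
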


Based on the previous results, we prove the following.
\begin{thm}
For every integer $n\ge 4$, $px_{3,2}(K_n)=2$.
\end{thm}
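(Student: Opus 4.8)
The plan is to establish the lower bound $px_{3,2}(K_n) \geq 2$ and then exhibit a $2$-edge-coloring of $K_n$ witnessing $px_{3,2}(K_n) \leq 2$ for every $n \geq 4$. The lower bound is immediate: with a single color, any $S$-tree on $S = \{u,v,w\}$ with $|S| = 3$ that has two incident edges at its Steiner point is not proper, and one cannot even obtain two internally disjoint trees that avoid a repeated color, so $px_{3,2}(K_n) \geq 2$. Hence the entire content is the upper bound, and it suffices to produce, for each fixed $3$-subset $S = \{x,y,z\}$, two internally disjoint proper $S$-trees under a fixed, explicitly described $2$-coloring of $E(K_n)$.

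For the coloring, I would fix an ordering $v_1, v_2, \ldots, v_n$ of the vertices and color the edges of a Hamiltonian path $v_1 v_2 \cdots v_n$ alternately with colors $1$ and $2$ (so $v_i v_{i+1}$ gets color $1$ if $i$ is odd and color $2$ if $i$ is even), and color every remaining (chord) edge arbitrarily, say all with color $1$; the key structural feature I actually need is that the alternating Hamiltonian path is a proper path and that there is enough flexibility among the chords. Actually, a cleaner choice guaranteeing the needed flexibility is the following: partition $V(K_n)$ into two parts $A$ and $B$ of sizes $\lfloor n/2 \rfloor$ and $\lceil n/2 \rceil$; color every edge inside $A$ and every edge inside $B$ with color $1$, and every edge between $A$ and $B$ with color $2$. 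A proper tree can then use at most one edge of each color along any path, so proper $S$-trees here are exactly those whose every vertex-to-vertex path alternates; in particular any path of the form (edge in $A$)(cross edge)(edge in $B$) is proper, and likewise shorter ones. This bipartition coloring is the one I would commit to, since it makes the case analysis transparent.

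Now fix $S = \{x,y,z\}$. By symmetry (and since $n \geq 4$ gives spare vertices) I split into cases by how $S$ meets $A$ and $B$. The two internally disjoint proper $S$-trees I construct will typically be: one "star-like" tree centered at a suitably chosen Steiner vertex $w \notin S$, and one "path-like" tree, the two being made internally disjoint by choosing their non-$S$ vertices disjointly — this is where $n \geq 4$, together with $|A|, |B| \geq 2$ being ensured for $n \geq 4$, gives the needed room. For instance, if $x \in A$ and $y, z \in B$, then the path $x\,y$ (cross edge) together with the edge $y z$ (inside $B$) is a proper $S$-tree $T_1$ using only the vertices of $S$; and for $T_2$ pick $w \in A \setminus \{x\}$ and $w' \in B \setminus \{y,z\}$ and take the tree with edges $xw$ (inside $A$, but this repeats — so instead take) $w w'$? — the point is to route so that at each internal vertex the two incident colors differ; concretely $T_2 = $ path $x - w' - y$ (both cross edges — not proper). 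The correct move is: $T_2$ uses Steiner vertex $w \in A\setminus\{x\}$ with edges $wy, wz$ (cross, cross — again a conflict at $w$). So one genuinely needs the Steiner vertex to sit "between" an $A$-endpoint and a $B$-endpoint: take $w \in B \setminus \{y,z\}$, edges $wy$ (inside $B$, color $1$), $wz$ (inside $B$, color $1$) — still a conflict. This illustrates the real obstacle.

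The main obstacle, then, is precisely the proper-tree constraint at Steiner vertices of degree $\geq 2$: in a $2$-coloring, a Steiner vertex can have degree at most $2$ in a proper tree and its two edges must carry the two different colors. So each proper $S$-tree on three terminals is either a path (two of the terminals have degree $1$, the third degree $2$, or all three collinear on a path with one terminal internal) or a "spider" with one internal Steiner vertex $w$ of degree $3$ — but degree $3$ at $w$ forces three distinct colors, impossible with $2$ colors; hence \emph{every proper $S$-tree is in fact a proper path through all three vertices}, i.e.\ $px_{3,2}(K_n) \leq 2$ reduces to finding, for every $3$-set $S$, two internally disjoint proper paths each visiting all of $x, y, z$. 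I would handle this by: (i) choosing an ordering of $S$, say visiting $x$, then $z$, then $y$, such that the colors can alternate; with the bipartition coloring, a path $x - z - y$ of length $2$ is proper iff consecutive edges differ in color, which happens exactly when $z$ is separated (in the $A/B$ partition) from exactly one of $x, y$; if $S$ lies entirely in $A$ (or entirely in $B$), insert one Steiner vertex from the other part between two terminals to force an alternation, e.g.\ $x - w - z - y$ with $w \in B$, giving colors (cross, cross, inside) — still needs care. Rather than the bipartition, I would therefore actually use the \emph{alternating Hamiltonian-path coloring} described first: there, the subpath of $v_1 \cdots v_n$ between the extreme two vertices of $S$ is automatically proper and contains all of $S$, giving $T_1$ for free; and $T_2$ is obtained by re-routing through a disjoint set of intermediate vertices while preserving alternation, using two colors on chords chosen to continue the alternation — the existence of such a disjoint alternating detour is the one estimate I would need to verify carefully, and it is here that $n \geq 4$ (providing at least one vertex outside $S$, and enough length to alternate) is exactly what is required. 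With $T_1 \subseteq$ Hamiltonian path and $T_2$ using a distinct internal vertex set, internal disjointness is immediate, completing the argument.
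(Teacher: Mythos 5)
Your reduction at the start is right and matches the paper's key observation: with only two colors a proper $S$-tree on three terminals cannot have a degree-$3$ Steiner vertex, so everything reduces to finding two internally disjoint proper paths through the three vertices of $S$. But after that the proposal has a genuine gap: no working coloring-plus-construction is ever produced. Your bipartition coloring you refute yourself (every attempted second tree has a color conflict at its Steiner vertex), and the fallback alternating-Hamiltonian-path coloring fails for a structural reason you do not address. If $T_1$ is taken to be the subpath of $v_1\cdots v_n$ between the two extreme vertices of $S$, then $T_1$ contains \emph{all} intermediate vertices of that subpath, and internal disjointness forces $T_2$ to avoid every one of them. In the worst case $S=\{v_1,v_j,v_n\}$, $T_1$ is the entire Hamiltonian path and $T_2$ would have to live on $S$ alone using only chords, all of which you colored $1$; the path $v_1v_jv_n$ then has two edges of color $1$ meeting at $v_j$ and is not proper. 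The promised ``disjoint alternating detour'' is exactly the missing content, and with all chords monochromatic it cannot alternate, so the gap is not merely a deferred verification but a step that would fail as stated.

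The paper's construction avoids this by using a Hamiltonian \emph{cycle} $C$ (for $n=2p$; the odd case attaches $v_{2p+1}$ separately), alternately colored, with all chords colored $1$. The three vertices of $S$ cut $C$ into three arcs, and the two trees are built from \emph{complementary} pieces: one tree is a chord $v_hv_i$ followed by the arc $C[v_i,v_j]$, the other is the union of the remaining two arcs $C[v_j,v_h]C[v_h,v_i]$, so they meet only in $S$ by construction. The only delicate point is properness where the chord attaches to the cycle, which the paper handles with a short case analysis on whether the color-$2$ cycle edge at $v_i$ points toward or away from $v_j$ (plus the boundary case $2p+i-h=1$). Something of this ``complementary arcs of a cycle'' flavor, or an explicit two-color routing of the second path, is what your write-up still needs to supply.
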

\begin{proof} {\bf Case 1.} $n=2p$ for $p \ge 2$.

Take a Hamiltonian cycle $C=v_1,v_2, \ldots, v_{2p}$ of $K_n$ and denote the $v_sv_t$-path in clockwise direction contained in $C$ by $C[v_s,v_t]$. Next, we will provide an edge-coloring of $K_n$ with 2 colors such that for any three vertices of $K_n$, there are two internally disjoint  proper trees connecting them. We alternately color the edges of $C$ with colors 1 and 2 starting with color 2, and color the rest of the edges with color 1. Let $S$ be any 3-subset of $V(K_n)$, without loss of generality, we assume that $S=\{v_i, v_j , v_h\}$ with $1 \le i < j < h \le 2p$. It is easy to see that the Hamiltonian cycle $C$ is partitioned into three segments $C[v_i,v_j]$, $C[v_j,v_h]$ and $C[v_h, v_i]$. If $2p+i-h =1$, then we get that $i=1$ and $h=2p$. Note that the edges $v_1v_2$ and $v_{2p-1}v_{2p}$ are colored with color 2 and the edges $v_{2p}v_1$ and $v_{2p-1}v_1$ are colored with color 1. Thus, there are two internally disjoint proper $S$-trees $P_1=v_{2p}v_1C[v_1,v_j]$ and $P_2=v_jv_{2p}v_{2p-1}v_1$ (if $v_j=v_{2p-1}$, then $P_2=v_{2p}v_{2p-1}v_1$). Now we suppose that $2p+i-h \ge 2$. If the edge incident to $v_i$ with color 2 is $v_iv_{i+1}$, then the two internally disjoint proper $S$-trees are $v_hv_iC[v_i,v_j]$ and $C[v_j,v_h]C[v_h,v_i]$. Otherwise, the edge incident to $v_i$ with color 2 is $v_{i}v_{i-1}$. Thus, the two internally disjoint proper $S$-trees are $v_hv_iv_{i-1}v_j$ and $C[v_i,v_j]C[v_j,v_h]$.

{\bf Case 2.} $n=2p+1$ for $p \ge 2$.

Let $H$ be a complete subgraph of $K_n$ with vertex set $\{v_1, v_2, \ldots, v_{2p}\}$. Color the edges of $H$ as above, and color the edge $v_{2p+1}v_i$ ($1\leq i\leq 2p$) with color 1 for $i$ odd and with color 2 for $i$ even. It is easy to see that for any three vertices of $V(H)$, there are two internally disjoint proper trees connecting them. Now we assume that $S=\{v_i, v_j ,v_{2p+1}\}$ ($1 \le i < j \le 2p$). Since there are two internally disjoint proper paths connecting $v_i$ and $v_j$ in the Hamiltonian cycle $C=v_1,v_2, \ldots, v_{2p}$, it follows that there are two internally disjoint proper $S$-trees in $K_n$.
\end{proof}

\begin{thm}
For every integer $n\ge 4$, $px_{n-1,2}(K_n)=2$.
\end{thm}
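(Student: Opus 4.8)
The lower bound is immediate: with a single color every tree on at least three vertices has two adjacent edges of the same color, and since $n-1\ge 3$ we get $px_{n-1,2}(K_n)\ge 2$. For the upper bound I would first reduce the problem. In any $2$-edge-coloring a proper tree has maximum degree at most $2$ (a vertex of degree $d$ forces $d$ distinct colors on its incident edges), hence is a path. Writing an $(n-1)$-subset as $S=V(K_n)\setminus\{v_0\}$, a proper $S$-tree is therefore a Hamiltonian path of $K_n-v_0$ or of $K_n$; and if $T_1,T_2$ are internally disjoint proper $S$-trees then $V(T_1)\cap V(T_2)=S$, so $v_0$ lies in at most one of them. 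Thus it suffices to produce, for every $v_0$, two edge-disjoint proper paths each containing all of $V(K_n)\setminus\{v_0\}$ (at most one passing through $v_0$).

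\noindent\emph{Even $n$.} For $n=4$ this is exactly $px_{3,2}(K_4)=2$, already proved. For even $n\ge 6$, $K_n$ decomposes into $\tfrac{n-2}{2}\ (\ge 2)$ Hamiltonian cycles plus a perfect matching; pick two such cycles $C_1,C_2$, color the edges of each alternately with $1,2$ (possible since $n$ is even), and color every remaining edge $1$. For any $v_0$, deleting $v_0$ from an alternately colored even cycle leaves a proper Hamiltonian path of $K_n-v_0$, since the surviving edges form a contiguous, hence alternating, arc. So $C_1-v_0$ and $C_2-v_0$ are edge-disjoint proper Hamiltonian paths of $K_n-v_0$ with common vertex set $S$, i.e. two internally disjoint proper $S$-trees; hence $px_{n-1,2}(K_n)=2$.

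\noindent\emph{Odd $n$: main idea.} Embed $K_n=K_{n+1}-x$ with $n+1$ even, and take a Hamiltonian-cycle decomposition of $K_{n+1}$ into cycles $D_1,\dots,D_m$ ($m=\tfrac{n-1}{2}\ge 2$) and a perfect matching $M$. Color each $D_i$ alternately with $1,2$ (each $D_i$ has even length $n+1$), color $M$ with color $1$, and restrict to $K_n$. At $x$ the two $D_i$-edges of each cycle reach $2m=n-1$ distinct vertices, and the single $M$-edge reaches the remaining vertex $x^{*}$. If $v_0\ne x^{*}$, then $v_0$ is adjacent to $x$ in exactly one cycle $D_i$; then $D_i-x-v_0$ is a proper Hamiltonian path of $K_n-v_0$ (deleting two adjacent cycle-vertices leaves an arc), and for any $j\ne i$ the path $D_j-x$ is a proper Hamiltonian path of $K_n$ containing $v_0$ and edge-disjoint from $D_i$. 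Together they form two internally disjoint proper $S$-trees.

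\noindent\emph{The remaining case and the main obstacle.} The delicate point is $v_0=x^{*}$, the one vertex not reachable from $x$ inside any $D_i$. Here one tree can be $D_1-x$ (a proper Hamiltonian path of $K_n$ with $x^{*}$ interior), but the other must be a proper Hamiltonian path of $K_n-x^{*}$ edge-disjoint from $D_1$, and simply deleting $x^{*}$ from some $D_j-x$ and reconnecting the two subpaths by one edge need not stay proper, because in an alternating path the two edges flanking a deleted interior vertex are surrounded by edges of opposite colors. I expect \textbf{this single case to be the main obstacle}. My plan is to build the second path from the perfect matching $M'=M\setminus\{xx^{*}\}$, which is a perfect matching of $K_n-x^{*}$ entirely of color $1$: chain its $p=\tfrac{n-1}{2}$ edges as $a_1b_1\,a_2b_2\cdots a_pb_p$, inserting between consecutive matching edges a color-$2$ edge not in $D_1$, giving an alternating (hence proper) Hamiltonian path of $K_n-x^{*}$ once the ordering and orientation of the edges of $M'$ are chosen so that all connecting color-$2$ edges are available; this reduces to finding a Hamiltonian path in a dense auxiliary graph on $p$ vertices, for which the freedom in choosing the decomposition of $K_{n+1}$ should suffice. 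In the smallest case $n=5$ it is cleanest to verify directly the coloring of $K_5$ in which color $1$ is a Hamiltonian $5$-cycle and color $2$ the complementary $5$-cycle, using vertex-transitivity to see that every $K_5-v_0$ has two edge-disjoint proper Hamiltonian paths.
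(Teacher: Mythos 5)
Your lower bound and the reduction to proper paths are fine, and your even case is correct (it uses the Hamiltonian-cycle-plus-perfect-matching decomposition of $K_n$, whereas the paper colors the $\frac{n}{2}$ Hamiltonian paths of the path factorization alternately and takes $P_i-v$ together with any other $P_j$; both work). The odd case, however, has a genuine gap, exactly where you flag it: the vertex $v_0=x^{*}$ matched to the deleted vertex $x$. There you still need a proper Hamiltonian path of $K_n-x^{*}$ edge-disjoint from $D_1$, and none of your ingredients produces one. Deleting the interior vertex $x^{*}$ from some $D_j-x$ cannot be repaired locally: if $a x^{*}$ and $x^{*}b$ are the flanking cycle edges, the surviving edges at $a$ and at $b$ have opposite colors, so with only two colors no single edge $ab$ can properly join the two stubs. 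Your fallback --- chaining the color-$1$ matching $M'$ by color-$2$ connector edges --- is only a plan: you never assign colors to the edges outside $\bigcup D_i\cup M$, so the connector edges have no guaranteed color, and the existence of a suitable ordering of $M'$ (your ``Hamiltonian path in a dense auxiliary graph'') is asserted, not proved. As written, the theorem is not established for odd $n\ge 5$.

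For comparison, the paper avoids this entirely by working inside the odd $K_n$ itself: factor $K_n$ into $\frac{n-1}{2}$ Hamiltonian cycles and color each one alternately so that the unavoidable defect (the one pair of adjacent equally colored edges forced by the odd cycle length) sits at the \emph{same} vertex $v_1$ in every cycle. Then if $v=v_1$, the paths $C_1-v_1$ and $C_2-v_1$ are both proper; if $v\ne v_1$, take the cycle $C_i$ containing the edge $vv_1$ (deleting $v$ removes one of the two offending edges at $v_1$ and leaves $v_1$ as an endpoint, so $C_i-v$ is a proper $S$-path) and, in any other cycle $C_j$, delete one edge $v_1v'$ at $v_1$ to get a proper Hamiltonian path of $K_n$ through $v$. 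If you want to salvage your write-up, replacing your odd-case embedding by this ``defect at a common vertex'' coloring is the missing idea.
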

\begin{proof} It is well known that if $n$ is even, then $K_n$ can be factored into $\frac{n}{2}$ Hamiltonian paths $\{P_1,P_2,\ldots,P_{\frac{n}{2}}\}$; if $n$ is odd, then $K_n$ can be factored into $\frac{n-1}{2}$ Hamiltonian cycles $\{C_1,C_2,\ldots,C_{\frac{n-1}{2}}\}$. Let $S$ be any $(n-1)$-subset of $V(K_n)$. Without loss of generality, assume that $\{v\}=V(K_n)\setminus S$. If $n$ is even, then for each Hamiltonian path $P_i$, we alternately color the edges of $P_i$ with color 1 and 2. Notice that each vertex will be an end-vertex of a Hamiltonian path. Let $P_i$ be the Hamiltonian path which contains $v$ as one of its end-vertices. Thus, there are two internally disjoint proper $S$-trees $P_i-v$ and $P_j$ $(i\neq j)$. If $n$ is odd, then for each Hamiltonian cycle $C_i$, we alternately color the edges of $C_i$ with color 1 and 2 such that the edges incident to $v_1$ in each cycle are colored the same. If $v=v_1$, then there are two internally disjoint proper $S$-trees $C_1-v_1$ and $C_2-v_1$. Now we suppose $v \ne v_1$. Let $C_i$ be the Hamiltonian cycle in which $v$ is adjacent to $v_1$, and let $v'$ be one of the neighbours of $v_1$ in another Hamiltonian cycle $C_j (i \ne j)$. Thus, there are two internally disjoint proper $S$-trees $C_i- v$ and $C_j - v_1v'$. Hence, $px_{n-1,2}(K_n)=2$.
\end{proof}


\begin{thm}\label{thm2}
For every two integers $n$ and $\ell$ with $n\geq2\ell$, $px_{n,\ell}(K_n)=2$.
\end{thm}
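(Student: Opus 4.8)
Since $k=n$ here, there is only a single $k$-subset to consider, namely $S=V(K_n)$, and an $S$-tree is then exactly a spanning tree of $K_n$; moreover any two spanning trees automatically satisfy $V(T_i)\cap V(T_j)=S$, so in this situation ``internally disjoint'' just means ``edge-disjoint''. Thus the plan is to produce a $2$-edge-colouring of $K_n$ admitting $\ell$ edge-disjoint proper spanning trees. Two preliminary remarks: first, the parameter range in the definition is consistent, because by Theorem~\ref{thm0} we have $\kappa_n(K_n)=n-\lceil n/2\rceil=\lfloor n/2\rfloor$, which is $\ge\ell$ precisely because $n\ge 2\ell$; second, for the lower bound, when $n\ge 3$ every spanning tree of $K_n$ has two adjacent edges, so no proper spanning tree can exist under a $1$-colouring, giving $px_{n,\ell}(K_n)\ge 2$. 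It remains to establish the matching upper bound.

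For the upper bound I would separate the two parities of $n$ and lean on the classical Hamiltonian decompositions of complete graphs. If $n$ is even, decompose $K_n$ into $n/2$ edge-disjoint Hamiltonian paths $P_1,\ldots,P_{n/2}$ (these partition $E(K_n)$), and colour the edges along each $P_i$ alternately with colours $1$ and $2$. Since the $P_i$ are edge-disjoint, these local colourings combine into a well-defined $2$-edge-colouring of $K_n$; each $P_i$ is then a proper path, hence a proper spanning tree, and as $\ell\le n/2$ the trees $P_1,\ldots,P_\ell$ are the required $\ell$ internally disjoint proper $S$-trees.

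If $n$ is odd, write $n=2p+1$ and decompose $K_n$ into $p$ edge-disjoint Hamiltonian cycles $C_1,\ldots,C_p$. For a cycle $C_i=u_1u_2\cdots u_{2p+1}u_1$ I would colour the edge $u_ju_{j+1}$ (indices read cyclically) with colour $1$ when $j$ is odd and colour $2$ when $j$ is even; because $2p+1$ is odd, the only adjacent pair of edges of $C_i$ receiving the same colour is the pair at $u_1$, namely $u_{2p+1}u_1$ and $u_1u_2$, both coloured $1$. Hence deleting the edge $u_1u_2$ turns $C_i$ into a proper Hamiltonian path $T_i$, i.e.\ a proper spanning tree. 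The $C_i$ are edge-disjoint, so the $T_i$ are edge-disjoint, and since $\ell\le p$ the trees $T_1,\ldots,T_\ell$ do the job. (The remaining cycles, if any, may be coloured by the same rule; their colouring is irrelevant.)

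The argument is largely routine once the two decomposition theorems are quoted. The only place calling for a little care is the odd case: one must pin down the unique monochromatic adjacent pair produced by the near-alternating $2$-colouring of an odd cycle and delete an incident edge to obtain a genuinely proper Hamiltonian path, and then check — which is immediate — that carrying this out simultaneously in $\ell$ of the edge-disjoint cycles still yields $\ell$ edge-disjoint proper spanning trees.
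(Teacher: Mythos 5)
Your proposal is correct and follows essentially the same route as the paper: both arguments use the classical decomposition of $K_n$ into $\lfloor n/2\rfloor$ edge-disjoint Hamiltonian paths (for even $n$) or Hamiltonian cycles (for odd $n$) and an alternating $2$-colouring to produce $\lfloor n/2\rfloor\ge\ell$ edge-disjoint proper spanning trees. Your treatment of the odd case (locating the unique monochromatic adjacent pair on each near-alternately coloured odd cycle and deleting an incident edge) just makes explicit a step the paper passes over quickly.
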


\begin{proof} It is obvious that $px_{n,\ell}(K_n)\geq2$. To show that the converse inequality, we will provide an edge-coloring of $K_n$ with 2 colors such that there are at least $\ell$ internally disjoint spanning proper trees in $K_n$. It is well known that if $n$ is even, then $K_n$ can be factored into $\frac{n}{2}$ Hamiltonian paths; if $n$ is odd, then $K_n$ can be factored into $\frac{n-1}{2}$ Hamiltonian cycles. In conclusion, $K_n$ contains $\lfloor\frac{n}{2}\rfloor$ pairwise edge-disjoint Hamiltonian paths. Thus, for each Hamiltonian path, we alternately color the edges using color 1 and 2 starting with color 1. If there still remains uncolored edges, then color them with color 1. It is easy to see that there exist $\lfloor\frac{n}{2}\rfloor \ge \ell$ internally disjoint spanning proper trees. Hence, $px_{n,\ell}(K_n)\leq2$. This completes the proof.
\end{proof}

\noindent\textbf{Remark:} Theorem \ref{thm2} is best possible in the sense of the order of $K_n$. It follows from Theorem \ref{thm0} that there are at most $\lfloor\frac{n}{2}\rfloor$ internally disjoint trees connecting $n$ vertices of $K_n$. Thus, $\ell\leq \lfloor\frac{n}{2}\rfloor$.

These results naturally lead to the following question: for general integers $k, \ell$ with $3\leq k\leq n-1$, whether there exists a positive integer $N_1=N_1(k,\ell)$ such that $px_{k,\ell}(K_n)=2$ for every integer $n\geq N_1$? Now, we use two distinct methods to answer this question. Although the result of Theorem~\ref{Thm:Kn-2-Explicit} is stronger than the result of Theorem~\ref{thm3}, we include both as a demonstration of the variety of possible approaches to this kind of question.

In order to prove Theorem~\ref{Thm:Kn-2-Explicit}, we need the following result of Sauer as presented in \cite{BB}.

\begin{thm} \cite{S} \label{Thm:Sauer}
Given $\delta \geq 3$ and $g \geq 3$, for any
$$
m \geq \frac{(\delta - 1)^{g - 1} - 1}{\delta - 2},
$$
there exists a $\delta$-regular graph $G$ of order $2m$ with girth at least $g$.
\end{thm}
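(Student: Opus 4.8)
The plan is to prove the statement by an explicit greedy construction on a fixed vertex set, rather than by any algebraic or probabilistic device. Fix a set $V$ of $n=2m$ vertices and build the graph one edge at a time, maintaining throughout two invariants: the current graph $H$ has girth at least $g$, and maximum degree at most $\delta$. Write $D(H)=\sum_{v\in V}(\delta-\deg_H(v))$ for the total degree-deficiency; since $n$ is even, $D(H)$ is always even, and $H$ is $\delta$-regular exactly when $D(H)=0$. The whole proof reduces to a single-step lemma: provided $m\ge m_0:=\frac{(\delta-1)^{g-1}-1}{\delta-2}$, whenever $H$ satisfies the two invariants and is not yet $\delta$-regular, one can modify $H$ — by adding one edge, or by a local switching followed by an edge addition — so that the invariants persist and $D$ strictly decreases. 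Iterating this lemma from the empty graph drives $D$ to $0$ and yields the desired $\delta$-regular graph of order $2m$ and girth at least $g$.

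The easy case is when two deficient vertices $u,v$ satisfy $d_H(u,v)\ge g-1$. Then $uv\notin E(H)$, and adding $uv$ creates exactly one new cycle, of length $d_H(u,v)+1\ge g$, so the girth invariant survives while both degrees rise and $D$ drops by $2$. The role of the threshold is precisely to make far-apart vertices available. First I would bound the ball $B_u$ of radius $g-2$ about a deficient vertex $u$: since $u$ is deficient it has at most $\delta-1$ neighbours, and along any shortest path each subsequent vertex contributes at most $\delta-1$ further vertices, so
$$|B_u|\le\sum_{i=0}^{g-2}(\delta-1)^i=\frac{(\delta-1)^{g-1}-1}{\delta-2}=m_0.$$
Because $n=2m\ge 2m_0>m_0\ge|B_u|$, there is always a vertex lying at distance at least $g-1$ from $u$; this is the only place where the exact value of the bound on $m$ enters.

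The remaining, harder case is when all deficient vertices cluster inside a single radius-$(g-2)$ ball $B_u$, so that every vertex outside $B_u$ is already saturated. Here I would use a switching. Choose a saturated vertex $v\notin B_u$, so $d_H(u,v)\ge g-1$, together with a neighbour $w$ of $v$ satisfying $d_H(u,w)\ge g-1$; deleting $vw$ and adding $uv$ keeps $v$ saturated, cannot lower the girth (edge deletion never creates cycles, and the added edge closes a cycle of length at least $g$), and transfers one unit of deficiency from $u$ to the far vertex $w$. Now $w$ is a deficient vertex at distance at least $g-1$ from the cluster, so connecting it back to a deficient vertex of the cluster reduces $D$ by $2$ and completes the step. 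The main obstacle is the selection of the neighbour $w$ with $d_H(u,w)\ge g-1$: when $v$ sits at distance exactly $g-1$ from $u$, some neighbours of $v$ may lie at distance $g-2$, and I must rule out the possibility that \emph{all} $\delta$ neighbours of $v$ are that close. This is exactly where the girth hypothesis and the surplus of at least $m_0$ vertices outside $B_u$ must be combined: two neighbours of $v$ both at distance $g-2$ from $u$ would force a short closed walk through $u$, and a careful accounting of such walks against the girth bound is the delicate part of the argument. Once this selection is justified, $D$ decreases at every step and terminates at $0$, producing the required graph.
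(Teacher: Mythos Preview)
The paper does not prove this theorem: it is quoted from Sauer \cite{S} (via Bollob\'as \cite{BB}) and used as a black box to obtain Corollary~\ref{Cor:Sauer} and then Theorem~\ref{Thm:Kn-2-Explicit}. There is therefore no proof in the paper to compare your proposal against.

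Judged on its own, your greedy-with-switching outline is in the spirit of the classical extremal constructions, and the ball estimate $|B_u|\le m_0$ is correct. The hard case, however, contains a genuine gap. The mechanism you propose for selecting the neighbour $w$ does not work: if $v$ sits at distance exactly $g-1$ from $u$ and two neighbours $w_1,w_2$ of $v$ both sit at distance $g-2$, the closed walk $u\leadsto w_1\to v\to w_2\leadsto u$ has length $2(g-2)+2=2g-2\ge g$, and there is no reason it should contain a cycle shorter than $g$, so no contradiction with the girth hypothesis arises. (A workable substitute is a counting argument: if every vertex of $V\setminus B_u$ sent all of its $\delta$ edges into $B_u$, then $\delta\,|V\setminus B_u|\le\delta\,|B_u|$, forcing $|B_u|=|V\setminus B_u|=m_0$ and making $B_u$ independent; but then $u$ is isolated and $|B_u|=1<m_0$, a contradiction. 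Hence some edge $vw$ with $v,w\notin B_u$ exists.)

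There is also a second gap you do not flag. After the switch $H\mapsto H-vw+uv$, your sentence ``connecting it back to a deficient vertex of the cluster reduces $D$ by $2$'' needs a deficient vertex $x$ with $d_{H'}(w,x)\ge g-1$. But the remaining deficient vertices all lie in $B_u$, and a vertex $x$ at distance $g-2$ from $u$ can perfectly well be adjacent to $w$ already (this is consistent with $d_H(u,w)\ge g-1$), so $d_{H'}(w,x)$ may be $1$. The standard proofs avoid this by an extremal choice of $H$ (maximum number of edges subject to $\Delta\le\delta$ and girth $\ge g$) together with a more delicate switching analysis or a secondary potential, rather than a single local step; as written, your argument does not terminate.
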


By removing a vertex from the graph provided by Theorem~\ref{Thm:Sauer}, we obtain an almost regular graph of odd order, still with girth at least $g$, but the degree of some vertices is $\delta - 1$. Thus, we replace $\delta$ with $\delta + 1$ in Theorem~\ref{Thm:Sauer} to obtain the following easy corollary.

\begin{cor}\label{Cor:Sauer}
Given $\delta \geq 3$ and $g \geq 3$, for any
$$
n \geq 2\frac{\delta^{g - 1} - 1}{\delta - 1} - 1,
$$
there exists a graph $G$ of order $n$ with $\delta(G) \geq \delta$ and girth at least $g$.
\end{cor}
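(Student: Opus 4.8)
The plan is to derive Corollary~\ref{Cor:Sauer} directly from Sauer's result (Theorem~\ref{Thm:Sauer}) by the substitution $\delta \mapsto \delta+1$, together with a single vertex deletion to cover odd orders. Since $\delta \geq 3$ forces $\delta+1 \geq 4 \geq 3$, Theorem~\ref{Thm:Sauer} applies with $\delta$ replaced by $\delta+1$: for every integer $m \geq \frac{((\delta+1)-1)^{g-1}-1}{(\delta+1)-2} = \frac{\delta^{g-1}-1}{\delta-1}$ there exists a $(\delta+1)$-regular graph of order $2m$ with girth at least $g$. I would set $M := \frac{\delta^{g-1}-1}{\delta-1} = 1+\delta+\cdots+\delta^{g-2}$, which is a positive integer, so the hypothesis of the corollary reads simply $n \geq 2M-1$.

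Next I would split on the parity of $n$. If $n$ is even, then since $2M-1$ is odd we actually have $n \geq 2M$, so $n = 2m$ with $m \geq M$; the $(\delta+1)$-regular graph $G$ of order $2m$ supplied above satisfies $\delta(G) = \delta+1 \geq \delta$ and has girth at least $g$, as required. If $n$ is odd, write $n = 2m-1$; from $2m-1 = n \geq 2M-1$ we get $m \geq M$, so there is a $(\delta+1)$-regular graph $G'$ of order $2m$ and girth at least $g$. Deleting an arbitrary vertex $v$ and setting $G := G'-v$ yields a graph of order $2m-1 = n$ in which every neighbour of $v$ has degree $\delta$ and every other vertex has degree $\delta+1$, so $\delta(G) = \delta$; moreover the girth of $G$ is at least that of $G'$, since removing a vertex cannot create a shorter cycle. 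Hence $G$ has all the desired properties.

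The argument is essentially bookkeeping, so I do not expect a genuine obstacle; the only point needing a moment's care is verifying that the single bound $n \geq 2M-1$ really does cover both parities. This works precisely because $M$ is an integer, so $2M-1$ is odd and the smallest even value it admits is $2M$, which is exactly the $m = M$ instance of the rescaled Sauer bound — so no separate threshold is needed for even $n$.
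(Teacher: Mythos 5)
Your proof is correct and follows the paper's approach exactly: apply Theorem~\ref{Thm:Sauer} with $\delta$ replaced by $\delta+1$, use the resulting $(\delta+1)$-regular graph directly when $n$ is even, and delete one vertex when $n$ is odd. Your explicit check that the single threshold $n \geq 2M-1$ covers both parities (because $M$ is an integer) is a welcome bit of care that the paper leaves implicit.
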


\begin{thm}\label{Thm:Kn-2-Explicit}
Let $k \geq 3$ and $\ell \geq 1$. For all $n$ with
$$
n \geq 2 \frac{ (\ell(k - 1) + k)^{4} - 1 }{(\ell + 1)(k - 1)} - 1,
$$
we have $px_{k, \ell}(K_{n}) = 2$.
\end{thm}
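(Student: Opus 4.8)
The plan is to construct, for $n$ as large as the hypothesis demands, an explicit $2$-edge-coloring of $K_n$ so that every $k$-subset $S$ is spanned by $\ell$ internally disjoint proper $S$-trees. The coloring will be built out of a base graph $H$ on the vertex set $V(K_n)$ that is both $\delta$-regular (or nearly so) and of high girth, with $\delta$ and $g$ chosen in terms of $k$ and $\ell$; the numerical bound on $n$ in the statement is precisely what Corollary~\ref{Cor:Sauer} requires to guarantee such an $H$ once we fix $g = 4$ and $\delta = \ell(k-1)+k$ or thereabouts. On $E(H)$ I would place a proper edge-coloring restricted to two colors along suitable paths/matchings — more precisely, I will use $H$ to route the trees and color only its edges carefully, while all edges of $K_n \setminus H$ get a single default color, say color $1$. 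The girth condition on $H$ guarantees that short cycles do not exist, which is what lets a walk through $H$ of bounded length alternate colors properly.

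The key steps, in order, are: (1) Fix parameters $g=4$, $\delta = \ell(k-1)+k$ and invoke Corollary~\ref{Cor:Sauer} to obtain a graph $H$ on $n$ vertices with $\delta(H)\ge \delta$ and girth $\ge 4$ whenever $n \ge 2\frac{\delta^{3}-1}{\delta - 1}-1$, which matches the displayed bound after substituting $\delta$. (2) Given an arbitrary $k$-set $S = \{s_1,\dots,s_k\}$, greedily build $\ell$ internally disjoint trees: for each $i=1,\dots,\ell$ in turn, construct a spanning-on-$S$ tree $T_i$ (a "broom" or a path through $S$ via private intermediate vertices) using vertices and $H$-edges not yet used by $T_1,\dots,T_{i-1}$; the minimum degree $\delta \ge \ell(k-1)+k$ ensures that at each of the at most $k$ vertices of $S$ we can find a fresh neighbor avoiding the $O(\ell k)$ vertices already committed. (3) Check that each $T_i$ can be taken to be a proper tree in the $2$-coloring: since girth $\ge 4$, the subgraph of $H$ used by $T_i$ is a forest or has only long cycles, so we can $2$-color its edges to be proper (a path or a star on few edges is trivially $2$-properly colorable; the absence of triangles prevents the only genuine obstruction for trees of small diameter), and the default-colored $K_n \setminus H$ edges are used only as pendant edges so they never create an adjacent monochromatic pair. (4) Conclude $px_{k,\ell}(K_n)\le 2$, and combine with the trivial lower bound $px_{k,\ell}(K_n)\ge 2$ for $k\ge 3$ to get equality.

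The main obstacle I anticipate is step (2)–(3) done \emph{consistently}: the coloring of $H$ is fixed once and for all before $S$ is revealed, so I cannot re-color $H$ to suit each $S$. The real work is therefore to choose the fixed $2$-coloring of $E(H)$ — plausibly by taking a proper edge-coloring of $H$ with $\chi'(H)\le \delta+1$ colors and then projecting those classes onto $\{1,2\}$ in a balanced way, or by decomposing $H$ into paths and $2$-coloring each alternately — such that \emph{every} low-diameter tree routed through $H$ inherits a proper coloring. Verifying that the greedy routing can always be confined to trees of diameter small enough (bounded in terms of $k$, independent of $n$) that properness is automatic, and that internal disjointness survives across all $\ell$ trees, is where the girth hypothesis and the degree surplus must be used together; this bookkeeping is the crux. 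Everything else — the arithmetic translating $\delta=\ell(k-1)+k$, $g=4$ into the stated bound on $n$, and the observation that $px_{k,\ell}\ge 2$ — is routine.
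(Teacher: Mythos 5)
Your overall instinct --- use Corollary~\ref{Cor:Sauer} to plant a high-girth, high-minimum-degree graph $H$ inside $K_n$ and route the $\ell$ trees through it greedily --- is exactly the paper's strategy, but two of your concrete choices break the argument. First, the girth: you take $g=4$, but the stated bound has $\delta^{4}-1$ in the numerator, which under Corollary~\ref{Cor:Sauer} corresponds to $\delta^{g-1}$ with $g=5$, not $g=4$ (note also that $(\ell+1)(k-1)=\delta-1$ for $\delta=\ell(k-1)+k$, so the bound is exactly the corollary's bound with $g=5$). Girth $5$ is not a cosmetic strengthening: the crucial step is that if $v_i$ has two unused $H$-neighbours $x,y$ outside $S$, then at most one of them can also be an $H$-neighbour of the next target $v_{i+1}$, since otherwise $v_i x v_{i+1} y$ would be a $4$-cycle in $H$. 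A triangle-free $H$ does not rule this out, and then your greedy extension can get stuck with no properly coloured continuation.

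Second, and more fundamentally, you never commit to a colouring, and the one you gesture at is the wrong shape. The paper's colouring is the simplest possible: \emph{every} edge of $H$ gets colour $1$ (red) and \emph{every} edge of the complement gets colour $2$ (blue); there is no proper edge-colouring of $H$ to project, and the complement edges are not pendant edges but essential internal edges of the trees. The engine of the proof is the observation (which your proposal omits) that a proper tree in a $2$-colouring has maximum degree $2$ and hence is a path; one then builds, for each of the $\ell$ trees, an $S$-alternating path $v_1 w_1 v_2 w_2 \cdots v_k$ whose edges alternate red ($v_j w_j \in E(H)$) and blue ($w_j v_{j+1} \notin E(H)$), so properness is automatic by construction rather than something to be verified against a pre-chosen colouring of $H$. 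The minimum degree $\delta = \ell(k-1)+k$ leaves at least $\delta - (\ell(k-1)-1) - (k-1) = 2$ fresh red neighbours at each step, and girth $5$ guarantees one of them sends a blue edge to $v_{i+1}$. Your acknowledged ``crux'' --- making a fixed colouring of $H$ serve every $S$ --- dissolves entirely under this design, but as written your proposal does not reach it.
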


\begin{proof}
First note that a proper tree using only two colors must be a path. This means that the goal of this result is to produce an edge-coloring with two colors of a complete graph in which any set of $k$ vertices is contained in $\ell$ internally disjoint proper paths.

By Corollary~\ref{Cor:Sauer} with $g = 5$ and $\delta = \ell(k - 1) + k$, we have that for any
$$
n \geq 2 \frac{ (\ell(k - 1) + k)^{4} - 1 }{(\ell + 1)(k - 1)} - 1,
$$
there exists a graph $H$ with $n$ vertices, girth at least $5$, and minimum degree at least $\ell(k - 1) + k$. Color the edges of $H$ red and color the complement of $H$ blue to complete a coloring of $G = K_{n}$.

Let $S$ be any set of $k$ vertices in this graph, say with $S = \{v_{1}, v_{2}, \dots, v_{k}\}$. We say that a path $P$ is $S$-alternating if each odd vertex of $P$ is in $S$ while each even vertex of $P$ is in $G \setminus S$. Suppose that we have constructed $t \geq 0$ proper paths that are $S$-alternating, each contain all of $S$, and are vertex-disjoint aside from the vertices of $S$. If $t = \ell$, this is the desired system of paths so suppose $t < \ell$ and choose the constructed set of paths so that $t$ is as large as possible.

Further suppose we have constructed an additional $S$-alternating proper path $P^{i}$ from $v_{1}$ to $v_{i}$ using red edges of the form $v_{j}w_{j}$ for each $j < i$ where $w_{j} \in G \setminus S$ and blue edges of the form $w_{j}v_{j + 1}$. If $i = k$, this contradicts the choice of $t$ so suppose $i < k$ and further choose this constructed path so that $i$ is as large as possible. Note that at most $\ell(k - 1) - 1$ vertices of $G \setminus S$ have been used in these existing paths.

Incident to $v_{i}$, there are at least $\delta - (\ell (k - 1) - 1) - (k - 1) \geq 2$ red edges to vertices of $G \setminus S$ that have not already been used in paths. Let $x$ and $y$ be the opposite ends of these edges. At most one of $x$ and $y$, say $x$, may have a red edge to $v_{i + 1}$ since the red graph was constructed to have girth $5$. This means that the proper path $P^{i}$ can be extended to $P^{i + 1}$ by including the red edge $v_{i}y$ and the blue edge $yv_{i + 1}$. This contradiction completes the proof.
\end{proof}

Next, we answer the above question by using probabilistic method.

\begin{thm}\label{thm3}
Let $k \geq 3$ and $\ell \geq 1$. For all $n$ with
$$
n \geq 2k(k + \ell) \ln \left( \frac{1}{1 - (1/2)^{2k - 3}} \right),
$$
we have $px_{k, \ell}(K_{n}) = 2$.
\end{thm}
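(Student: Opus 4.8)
The plan is to use the probabilistic method: pick a uniformly random $2$-coloring of the edges of $K_n$ (each edge independently red or blue with probability $1/2$) and show that, with positive probability, this coloring has the property that every $k$-set $S$ lies on $\ell$ internally disjoint proper $S$-trees. As in the proof of Theorem~\ref{Thm:Kn-2-Explicit}, since a proper tree in a $2$-coloring must be a path, it suffices to produce $\ell$ internally disjoint proper paths each containing all of $S$. First I would fix $S=\{v_1,\dots,v_k\}$ and a disjoint ``reservoir'' of extra vertices, and for each of the $\ell$ desired paths try to build an $S$-alternating path $v_1 w_1 v_2 w_2 \cdots w_{k-1} v_k$ where the $w_j$ are drawn from the reservoir and disjoint across the $\ell$ paths: such a path is proper iff for each internal vertex the two incident edges get different colors, i.e. $c(v_jw_j)\neq c(w_jv_{j+1})$ and $c(w_{j-1}v_j)\neq c(v_jw_j)$. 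The total number of edges on one such path is $2(k-1)$, with $2k-3$ adjacency constraints along it (the constraint at each of the $2k-3$ internal vertices), so a single fixed candidate path is proper with probability exactly $(1/2)^{2k-3}$.

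Next I would set up the counting. For a fixed $S$, I want $\ell$ pairwise ``reservoir-disjoint'' $S$-alternating candidate paths, so I reserve $\ell(k-1)$ vertices outside $S$, partition them into $\ell$ blocks of size $k-1$, and within each block consider all $(k-1)!$ orderings giving candidate paths; I declare $S$ ``good'' if at least one ordering in each of the $\ell$ blocks yields a proper path. Because distinct blocks use disjoint vertex sets, the edge sets of candidate paths in different blocks are disjoint, hence independent; and within a single block the events ``ordering $\pi$ is proper'' for the various $\pi$ are positively correlated enough that a union bound on the failure suffices — actually it is cleanest to just use \emph{one} fixed ordering per block, so that the $\ell$ events are mutually independent and each fails with probability $1-(1/2)^{2k-3}$. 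Then $\Pr[S \text{ bad}] \le \bigl(1-(1/2)^{2k-3}\bigr)^{\ell}$; but this is not small enough to union-bound over $\binom{n}{k}$ sets, so instead I partition the vertices outside $S$ into $\lfloor (n-k)/(k-1)\rfloor$ disjoint blocks of size $k-1$ and take $\ell$ of them — this only works if we have many disjoint tries, so the right move is: with $t := \lfloor (n-k)/(\ell(k-1))\rfloor$ disjoint groups, each group itself containing $\ell$ disjoint blocks, $\Pr[S\text{ bad}] \le \bigl(1-(1/2)^{2k-3}\bigr)^{\ell t}$, and then a union bound over all $\binom{n}{k}\le n^k$ choices of $S$ gives $\Pr[\text{some }S\text{ bad}] \le n^k \bigl(1-(1/2)^{2k-3}\bigr)^{\ell t}$.

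Finally I would check that the stated bound on $n$ forces $n^k (1-(1/2)^{2k-3})^{\ell t} < 1$. Taking logarithms, this needs $k\ln n < \ell t \ln\!\bigl(\tfrac{1}{1-(1/2)^{2k-3}}\bigr)$; using $t \ge \tfrac{n-k}{\ell(k-1)} - 1 \ge \tfrac{n}{2k(k+\ell)}\cdot\tfrac{1}{\,\text{(const)}\,}$-type estimates one sees the hypothesis $n \ge 2k(k+\ell)\ln\!\bigl(\tfrac{1}{1-(1/2)^{2k-3}}\bigr)$ is (after also using $\ln n$ small compared to $n$, or absorbing it) exactly what makes $\ell t \ln(\cdots)$ dominate $k\ln n$; I would present this as a short chain of inequalities rather than grinding it out. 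The lower bound $px_{k,\ell}(K_n)\ge 2$ is trivial (one color cannot even connect two vertices properly for $n\ge 3$), so combining the two gives equality. The main obstacle — and the only delicate point — is arranging the independence bookkeeping so that the per-set failure probability decays geometrically in something proportional to $n$ (not just in $\ell$), which is what lets the $n^k$ union bound be defeated; everything else is routine estimation to match the precise constant $2k(k+\ell)$ in the hypothesis.
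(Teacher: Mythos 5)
Your overall strategy is the same as the paper's: a uniformly random $2$-coloring, $S$-alternating candidate paths $v_1w_1v_2w_2\cdots w_{k-1}v_k$ through disjoint $(k-1)$-subsets of $V(K_n)\setminus S$, the computation that each such path is proper with probability exactly $2/2^{2k-2}=(1/2)^{2k-3}$, and a union bound over the at most $n^k$ choices of $S$. However, there is a genuine gap in the step where you bound the per-$S$ failure probability. With your grouping scheme ($t$ groups, each consisting of $\ell$ disjoint blocks, a group succeeding when all $\ell$ of its blocks yield proper paths), the failure probability is $(1-p^{\ell})^{t}$ with $p=(1/2)^{2k-3}$, not $(1-p)^{\ell t}$ as you wrote: $(1-p)^{\ell t}$ is the probability that \emph{every one} of the $\ell t$ candidate paths fails, i.e.\ that there is not even a single proper path, which is a strictly smaller quantity (indeed $(1-p)^{\ell}\le 1-p^{\ell}$), so your claimed bound is not a valid upper bound for the event you actually need. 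The correct bound $(1-p^{\ell})^{t}$ decays like $e^{-p^{\ell}t}$, so the exponent carries a factor $2^{-(2k-3)\ell}$ and the threshold on $n$ that this grouping genuinely yields is exponentially worse in $\ell$ than the stated $2k(k+\ell)\ln\bigl(1/(1-(1/2)^{2k-3})\bigr)$; the ``short chain of inequalities'' you defer to at the end cannot close.

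The repair is what the paper does: take all $N=\lfloor (n-k)/(k-1)\rfloor$ internally disjoint candidate paths at once (no grouping), note that their propriety events are independent because the paths are edge-disjoint, and bound the probability that fewer than $\ell$ of these $N$ trials succeed by the binomial tail
$$
\binom{N}{\ell-1}(1-p)^{N-\ell+1}\le N^{\ell-1}(1-p)^{N-\ell+1},
$$
obtained by a union bound over which $N-\ell+1$ trials fail. This decays at the rate $(1-p)^{N}$ up to a polynomial factor, which is what defeats the $n^{k}$ union bound under the stated hypothesis on $n$ (together with the side condition $N\ge\ell$, i.e.\ $n\ge k+(k-1)\ell$). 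The rest of your outline --- the reduction of proper trees to proper paths under two colors, the propriety probability of a single alternating path, and the trivial lower bound $px_{k,\ell}(K_n)\ge 2$ --- is fine.
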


\begin{proof} Obviously, $px_{k,\ell}(K_n)\geq2$. For the converse, we color the edges of $K_n$ with two colors uniformly at random. For a $k$-subset $S$ of $V(K_n)$, let $A_S$ be the event that there exist at least $\ell$ internally disjoint proper $S$-trees. Note that a proper tree using only two colors must be a path. It is sufficient to show that $Pr[\ \underset{S} \bigcap A_S\ ]>0$.

Let $S$ be any $k$-subset of $V(K_n)$, without loss of generality, we assume $S=\{v_1,v_2,\ldots,v_k\}$. For any $(k-1)$-subset $T$ of $V(K_n)\setminus S$, suppose $T=\{u_1,u_2,\ldots,u_{k-1}\}$, define $P_T={v_1u_1v_2u_2\cdots v_{k-1}u_{k-1}v_k}$ as a path of length $2k-2$ from $v_1$ to $v_k$, which implies $P_T$ is an $S$-tree. Note that for $T,T'\subseteq V(K_n)\setminus S$ and $T\cap T'=\emptyset$, $P_T$ and $P_{T'}$ are two internally disjoint $S$-trees.  Let $\mathcal{P}=\{P_T \mid T\subseteq V(K_n)\setminus S\}$. Take $\mathcal{P}'$ to be a subset of $\mathcal{P}$ which consists of $\lfloor\frac{n-k}{k-1}\rfloor$ internally disjoint $S$-trees in $\mathcal{P}$. Set $p= Pr[\ P_T \in\mathcal{P}'$ is a proper $S$-tree\ ]$=\frac{2}{2^{2k-2}}=\frac{1}{2^{2k-3}}$. Let $A_S'$ be the event that there exist at most $\ell-1$ internally disjoint proper $S$-trees in $\mathcal{P}'$. Assume that $\lfloor\frac{n-k}{k-1}\rfloor > \ell-1$ (that is, $n\ge k+(k-1)\ell$), we have
\begin{align*}
Pr[\ \overline{A_S}\ ]&\leq Pr[\ A_S'\ ]\\
&\leq {\lfloor\frac{n-k}{k-1}\rfloor\choose \lfloor\frac{n-k}{k-1}\rfloor-(\ell-1)}(1-p)^{\lfloor\frac{n-k}{k-1}\rfloor-(\ell-1)}\\
&= {\lfloor\frac{n-k}{k-1}\rfloor\choose \ell-1}(1-p)^{\lfloor\frac{n-k}{k-1}\rfloor-(\ell-1)}.
\end{align*}

Then over all possible choices of $S$ with $|S| = k$, we get
\begin{align*}
Pr[\ \underset{S} \bigcap A_S\ ]&=1-Pr[ \ \bigcup\overline{A_S} \ ]\\
&\geq1-\underset{S} \sum Pr[\overline{A_S}]\\
&>1-{n\choose k}{\lfloor\frac{n-k}{k-1}\rfloor\choose \ell-1}(1-p)^{\lfloor\frac{n-k}{k-1}\rfloor-(\ell-1)}\\
&>1-n^k\left\lfloor\frac{n-k}{k-1}\right\rfloor^{\ell-1}(1-p)^{\lfloor\frac{n-k}{k-1}\rfloor-\ell+1}\\
& >0
\end{align*}
for
$$
n \geq 2k(k + \ell) \ln \left( \frac{1}{1 - (1/2)^{2k - 3}} \right).
$$

\end{proof}

\section{Complete bipartite graphs}

In this section, we will turn to study the $(k,\ell)$-proper index of complete bipartite graphs.

\begin{thm}\label{thm4}
Let $k$ and $\ell$ be two positive integers with $k\geq3$. Then there exists a positive integer $N_2$ such that $px_{k,\ell}(K_{n,n})=2$ for every integer $n\geq N_2$.
\end{thm}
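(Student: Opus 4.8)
The plan is to adapt the probabilistic argument of Theorem~\ref{thm3} to the bipartite setting, with the main new issue being the parity of $S$-alternating paths. Let the bipartition of $K_{n,n}$ be $(X,Y)$ with $|X|=|Y|=n$, and color every edge of $K_{n,n}$ with one of two colors uniformly at random. As before, a proper tree in a $2$-coloring is a path, so it suffices to find, for each $k$-subset $S$, at least $\ell$ internally disjoint proper $S$-paths. The key structural point: since $K_{n,n}$ is bipartite, an $S$-path must weave between $X$ and $Y$, so the natural building block is a path that alternates $S$-vertex, non-$S$-vertex, $S$-vertex, $\dots$ only when consecutive $S$-vertices lie in opposite parts; when two $S$-vertices lie in the same part we must insert a non-$S$ vertex of the opposite part between them. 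So I would first fix an ordering $v_1,\dots,v_k$ of $S$ and build a path that visits them in order, inserting one fresh ``connector'' vertex from the appropriate part between each consecutive pair (and possibly needing two connectors in a row if both $v_j,v_{j+1}$ are in the same part, to maintain bipartiteness). This uses at most $2(k-1)$ connector vertices per path and has length at most $3k-3$ or so; the exact bound is a routine case check on how $S$ splits between $X$ and $Y$.

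Next I would count disjoint candidate paths. Having fixed the template (which connector goes in which part), each choice of a pairwise-disjoint family of connector-vertex tuples from $(X\cup Y)\setminus S$ yields an internally disjoint $S$-path, and one can pack $\Theta(n/k)$ of them, say at least $\lfloor (n-k)/(2k)\rfloor$ of them, into a family $\mathcal{P}'$ of internally disjoint candidates. For a single such candidate path of bounded length $L \le 3k$, the probability it is properly colored is $2 \cdot 2^{-L} \ge 2^{-3k+1} =: q$, a constant depending only on $k$. The events ``$P$ is proper'' for the paths $P\in\mathcal{P}'$ are mutually independent because the paths are edge-disjoint. Then exactly as in Theorem~\ref{thm3},
\begin{align*}
\Pr[\,\overline{A_S}\,] &\le \binom{|\mathcal{P}'|}{\ell-1}(1-q)^{|\mathcal{P}'|-(\ell-1)},
\end{align*}
and a union bound over the $\binom{2n}{k}$ choices of $S$ gives
\begin{align*}
\Pr\Big[\,\bigcap_S A_S\,\Big] \ge 1 - \binom{2n}{k}\binom{|\mathcal{P}'|}{\ell-1}(1-q)^{|\mathcal{P}'|-(\ell-1)} > 0
\end{align*}
once $n$ is large enough, since the dominant factor $(2n)^k n^{\ell-1}$ grows polynomially while $(1-q)^{\Theta(n/k)}$ decays exponentially in $n$. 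Extracting an explicit $N_2=N_2(k,\ell)$ is then just the same logarithm estimate as before, giving something like $N_2 = O\!\big(k^2(k+\ell)2^{3k}\big)$.

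The main obstacle, and the only place genuinely different from the complete-graph case, is the bipartiteness bookkeeping: one must be careful that the template actually produces a valid path in $K_{n,n}$ (consecutive vertices in opposite parts), that the connector vertices can be drawn with the required part-membership from $(X\cup Y)\setminus S$ — which needs both $|X\setminus S|$ and $|Y\setminus S|$ to be $\Omega(n)$, true since $|S|=k$ is fixed and $n$ is large — and that the resulting family is genuinely internally disjoint. Once the template is pinned down these are routine, so I expect the proof to go through cleanly; I would present the path construction carefully and then cite the computation of Theorem~\ref{thm3} almost verbatim for the probability estimate. One should also note $\ell\le\kappa_k(K_{n,n})$ is automatically satisfied for large $n$, so the statement is non-vacuous.
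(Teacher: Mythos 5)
Your proposal follows essentially the same route as the paper: a uniformly random $2$-coloring, candidate $S$-paths built by threading the vertices of $S$ together with connector vertices from $(X\cup Y)\setminus S$ chosen in the correct part, a packing of $\Theta(n/k)$ internally disjoint such candidates, and the same first-moment/union-bound computation as in Theorem~\ref{thm3}. The only slip is in your parity bookkeeping --- two consecutive $S$-vertices in the \emph{same} part need exactly one connector from the opposite part (not two), while a pair in \emph{opposite} parts needs zero or two --- but since you defer the template to a routine case check and your length bound $L\le 3k$ absorbs either convention, this does not affect the argument; the paper resolves it by listing the $X$-vertices of $S$ first and the $Y$-vertices second, yielding a clean alternating path of length $2k-1$.
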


\begin{proof} Clearly, $px_{k,\ell}(K_{n,n})\geq2$. We only need to show that $px_{k,\ell}(K_{n,n})\leq 2$. Firstly, color the edges of $K_{n,n}$ with two colors uniformly at random. For a $k$-subset $S$ of $V(K_{n,n})$, let $B_S$ denote the event that there exist at least $\ell$ internally disjoint proper $S$-trees. Note that a proper tree using only two colors must be a path. It is sufficient to show that $Pr[\ \underset{S} \bigcap B_S\ ]>0$.

Assume that $K_{n,n}=G[X,Y]$, where $X=\{x_1, x_2, \ldots, x_n\}$ and $Y=\{y_1, y_2, \ldots, y_n\}$. We distinguish the following two cases.

\noindent{\bf Case 1.}  Fix the vertices in $S$ in the same class of $K_{n,n}$.

Without loss of generality, we suppose that $S=\{x_1,x_2,\ldots,x_k\}\subseteq X$. Let $T$ be any $(k-1)$-subset of $Y$, assume that $T=\{y_1,y_2,\ldots, y_{k-1}\}$, define $P_T=x_1y_1\ldots x_{k-1}y_{k-1}x_k$ as a path of length $2k-2$ from $x_1$ to $x_k$, which follows that $P_T$ is an $S$- tree. Note that for $T,T'\subseteq Y$ and $T\cap T'=\emptyset$, $P_T$ and $P_{T'}$ are two internally disjoint $S$-trees.  Let $\mathcal{P}_1=\{P_T \mid T\subseteq V(K_n)\setminus S\}$. Take $\mathcal{P}_1'$ to be a subset of $\mathcal{P}_1$ which consists of $\lfloor\frac{n}{k-1}\rfloor$ internally disjoint $S$-trees in $\mathcal{P}_1$. Set $p_1= Pr[\ P_{T} \in\mathcal{P}_1'$ is a proper $S$-tree\ ]$=\frac{2}{2^{2k-2}}=\frac{1}{2^{2k-3}}$. Let $B_S'$ be the event that there exist at most $\ell-1$ internally disjoint proper $S$-trees in $\mathcal{P}_1'$.  Assume that $\lfloor\frac{n}{k-1}\rfloor > \ell-1$ (that is, $n\ge(k-1)\ell$), we have
\begin{align*}
Pr[\ \overline{B_S}\ ]&\leq Pr[\ B_S'\ ]\\
&\leq {\lfloor\frac{n}{k-1}\rfloor\choose \lfloor\frac{n}{k-1}\rfloor-(\ell-1)}(1-p_1)^{\lfloor\frac{n}{k-1}\rfloor-(\ell-1)}\\
&= {\lfloor\frac{n}{k-1}\rfloor\choose \ell-1}(1-p_1)^{\lfloor\frac{n}{k-1}\rfloor-(\ell-1)}\\
&<(\frac{n}{2})^{\ell-1}(1-p_1)^{\lfloor\frac{n}{k-1}\rfloor-(\ell-1)}.
\end{align*}

\noindent{\bf Case 2.} Fix the vertices in $S$ in different classes of $K_{n,n}$.

Suppose that $S\cap X=\{s_1,s_2,\ldots, s_r\}$ and $S\cap Y=\{s_{r+1},s_{r+2},\ldots, s_k\}$, where $r$ is an positive integer with $1\leq r\leq k-1$. For any $(k-r)$-subset $T_X$ of $X\setminus S$ with $T_X=\{x_1,x_2,\ldots, x_{k-r}\}$ and any $r$-subset $T_Y$ of $Y\setminus S$ with $T_Y=\{y_1,y_2,\ldots, y_{r}\}$, let $T=T_X\cup T_Y$, and define $P_T=s_1y_1\ldots s_{r}y_{r}x_1s_{r+1}x_2\ldots s_{k-1}x_{k-r}s_k$ as a path of length $2k-1$ from $s_1$ to $s_k$. Obviously, $P_T$ is an $S$-tree. Let $\mathcal{P}_2=\{P_T \mid T=T_X\cup T_Y ,T_X\subseteq X\setminus S,T_Y\subseteq Y\setminus S\}$. Then $\mathcal{P}_2$ has $t=\min\{\lfloor\frac{n-r}{k-r}\rfloor, \lfloor\frac{n-(k-r)}{r}\rfloor\}$ internally disjoint $S$-trees. Take $\mathcal{P}_2'$
 to be a subset of $\mathcal{P}_2$ which consists of $t$ internally disjoint $S$-trees in $\mathcal{P}_2$. Set $p_2= Pr[\ P_T \in\mathcal{P}_2'$ is a proper $S$-tree\ ]$=\frac{2}{2^{2k-1}}=\frac{1}{2^{2k-2}}$. Let $B_S''$ be the event that there exist at most $\ell-1$ internally disjoint proper $S$-trees in $\mathcal{P}_2'$. Note that $\lfloor\frac{n-1}{k-1}\rfloor \le t < \frac{n}{2}$. Here, we assume that $\lfloor\frac{n-1}{k-1}\rfloor > \ell-1$ (that is, $n\ge (k-1)\ell+1$), we obtain
\begin{align*}
Pr[\ \overline{B_S}\ ]&\leq Pr[\ B_S''\ ]\\
&\leq {t \choose t-(\ell-1)}(1-p_2)^{t-(\ell-1)}\\
&= {t\choose \ell-1}(1-p_2)^{t-(\ell-1)}\\
&<(\frac{n}{2})^{\ell-1}(1-p_2)^{\lfloor\frac{n-1}{k-1}\rfloor-(\ell-1)}.
\end{align*}

Since $p_2<p_1$ and $\lfloor\frac{n-1}{k-1}\rfloor \le \lfloor\frac{n}{k-1}\rfloor$, we get $Pr[\ \overline{B_S}\ ]<(\frac{n}{2})^{\ell-1}(1-p_2)^{\lfloor\frac{n-1}{k-1}\rfloor-(\ell-1)}$ for every $k$-subset S of $V(K_{n,n})$. It yields that
\begin{align*}
Pr[\ \underset{S} \bigcap B_S\ ]&=1-Pr[\ \bigcup\overline{B_S}\ ]\\
&\geq1-\underset{S} \sum Pr[\overline{B_S}]\\
&>1-{2n\choose k}(\frac{n}{2})^{\ell-1}(1-p_2)^{\lfloor\frac{n-1}{k-1}\rfloor-(\ell-1)}\\
&>1-2^{k-\ell+1}n^{k+\ell-1}(1-p_2)^{\lfloor\frac{n-1}{k-1}\rfloor-(\ell-1)}.
\end{align*}

We shall be guided by the above inequality in search for the value of $N_2$, we find that the inequality $2^{k-\ell+1}n^{k+\ell-1}(1-p_2)^{\lfloor\frac{n-1}{k-1}\rfloor-(\ell-1)}\leq1$ will lead to $Pr[\ \underset{S} \bigcap B_S\ ]>0$.
With similar arguments in Theorem \ref{thm3}, we can obtain that there exists a positive integer $N_2$ for every integer $n\geq N_2$. 
\end{proof}

With arguments similar to Theorem \ref{thm4}, we can extend the above result to more general complete bipartite graph $K_{m,n}$, where $m=O(n^r)$, $r\in\mathbb{R}$ and $r\geq 1$.

\begin{thm}\label{thm5}
Let $m$ and $n$ be two positive integers with $m=O(n^r)$, $r\in\mathbb{R}$ and $r\geq 1$. For every pair of integer of $k,\ell$ with $k\geq3$, there exists a positive integer $N_3=N_3(k,\ell)$ such that $px_{k,\ell}(K_{m,n})=2$ for every integer $n\geq N_3$.
\end{thm}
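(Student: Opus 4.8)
The plan is to mimic the probabilistic argument of Theorem~\ref{thm4} almost verbatim, tracking how the parameter $m$ enters the bounds. As before, color the edges of $K_{m,n}=G[X,Y]$ with $|X|=m\geq n=|Y|$ independently and uniformly with two colors, and note that a proper tree with two colors is a path. For a $k$-subset $S$, we again split into the case where $S$ lies in one class and the case where it is split as $S\cap X$, $S\cap Y$ with $|S\cap Y|=r$, $1\le r\le k-1$. In each case we exhibit a large family of pairwise internally disjoint $S$-paths of bounded length (at most $2k-1$), each using a fresh batch of at most $k-1$ interior vertices; here the constraint is the smaller side, so the number of such disjoint paths is at least of order $n/(k-1)$ in every case, exactly as in Theorem~\ref{thm4}. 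The probability that a fixed such path is proper is again a constant $p=p(k)>0$ (either $1/2^{2k-3}$ or $1/2^{2k-2}$), so $Pr[\overline{B_S}]\le \binom{t}{\ell-1}(1-p)^{t-(\ell-1)}$ with $t=\Omega(n)$.

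The only genuinely new ingredient is the union bound over $S$. Since $V(K_{m,n})$ has $m+n$ vertices, there are $\binom{m+n}{k}\le (m+n)^k$ choices of $S$, and with $m=O(n^r)$ this is at most $C\,n^{rk}$ for a suitable constant $C=C(k,r)$. Hence
\begin{align*}
Pr\Big[\bigcap_S B_S\Big] &\geq 1-\sum_S Pr[\overline{B_S}]\\
&> 1-C\,n^{rk}\Big(\tfrac{n}{2}\Big)^{\ell-1}(1-p)^{\lfloor\frac{n-1}{k-1}\rfloor-(\ell-1)}.
\end{align*}
Because $(1-p)^{\Theta(n)}$ decays exponentially in $n$ while $n^{rk+\ell-1}$ grows only polynomially, the right-hand side exceeds $0$ once $n$ is large enough; this defines $N_3=N_3(k,\ell,r)$, and for $n\ge N_3$ the random coloring works with positive probability, giving $px_{k,\ell}(K_{m,n})\le 2$. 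Combined with the trivial bound $px_{k,\ell}(K_{m,n})\ge 2$, this yields equality.

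The main obstacle — really the only point requiring care — is making sure the lower bound $t=\Omega(n)$ on the number of internally disjoint $S$-paths survives when the two sides are unequal. Writing $a=|S\cap X|$, $b=|S\cap Y|=k-a$, in Case~2 the disjoint-path family is limited by $\min\{\lfloor\frac{m-a}{b}\rfloor,\lfloor\frac{n-b}{a}\rfloor\}$ (each path of the form used consumes $b$ vertices from $X\setminus S$ and $a$ from $Y\setminus S$), and since $m\ge n$ this minimum is at least $\lfloor\frac{n-b}{a}\rfloor\ge\lfloor\frac{n-(k-1)}{k-1}\rfloor$, still linear in $n$; the single-class case is identical to Theorem~\ref{thm4}. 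Everything else is a routine re-run of the estimates in Theorem~\ref{thm4} with the exponent $k$ in $n^k$ replaced by $rk$, so one only needs to observe that the exponential term still dominates. We omit the now-standard calculation.
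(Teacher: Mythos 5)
Your proposal is correct and follows exactly the route the paper intends: the paper in fact gives no written proof of Theorem~\ref{thm5}, stating only that the arguments of Theorem~\ref{thm4} carry over, and your write-up supplies precisely those details (the $\Omega(n)$ bound on disjoint $S$-paths coming from the smaller side, and the union bound over $\binom{m+n}{k}=O(n^{rk})$ sets still being beaten by the exponential decay). Your observation that $N_3$ really depends on $r$ as well is a fair and accurate refinement of the statement.
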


\section{Random graphs}

At the beginning of this section, we introduce some basic definitions about random graphs. The most frequently occurring probability model of random graphs is the Erd\"{o}s-R\'{e}nyi random graphs model \cite{ER}. The model $G_{n,p}$ consists of all graphs on $n$ vertices in which the edges are chosen independently and randomly with probability $p$. We say that an event $\mathcal{A}$ happens almost surely if $Pr[\mathcal{A}]\rightarrow1$ as $n\rightarrow\infty$.

We now focus on the $(k,\ell)$-proper index of the random graph $G_{n,p}$. In what follows, we first show two lemmas which are useful in the main result of this section.

\begin{lem}\label{lem2}
 For any $c\geq 5$, if $p\geq c\sqrt[k]{\frac{\log_a n}{n}}$, then almost surely any $k$ vertices in $G_{n,p}$ have at least $2k^2\log_a n$ common neighbours, where $a=1+\frac{1}{2^{2k-3}-1}$.
\end{lem}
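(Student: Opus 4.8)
The statement to prove is Lemma~\ref{lem2}: if $p \geq c\sqrt[k]{(\log_a n)/n}$ with $c \geq 5$, then almost surely every set of $k$ vertices in $G_{n,p}$ has at least $2k^2 \log_a n$ common neighbours. This is a routine first-moment / union-bound argument, so my plan is to fix a set $W$ of $k$ vertices, count common neighbours, and then sum the failure probability over all $\binom{n}{k}$ choices of $W$.

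First I would fix $W \subseteq V(G_{n,p})$ with $|W| = k$, and for each vertex $v \notin W$ let $X_v$ be the indicator that $v$ is adjacent to every vertex of $W$; these events are independent across distinct $v$ (they involve disjoint edge sets), and $\Pr[X_v = 1] = p^k$. So $X := \sum_{v \notin W} X_v$ is a sum of $n - k$ i.i.d. Bernoulli$(p^k)$ variables with mean $\mu = (n-k)p^k$. Since $p^k \geq c^k (\log_a n)/n$, we get $\mu \geq c^k \frac{n-k}{n}\log_a n$, which for $n$ large is at least, say, $\tfrac{1}{2}c^k \log_a n \geq \tfrac{1}{2} 5^k \log_a n$, comfortably larger than $4k^2 \log_a n$ once $n$ (hence the comparison $5^k$ vs $8k^2$) is taken care of. Then I apply the standard Chernoff lower-tail bound $\Pr[X \leq \mu/2] \leq e^{-\mu/8}$ (or $\exp(-\Omega(\mu))$ with whatever constant is convenient), which gives $\Pr[X < 2k^2 \log_a n] \leq \Pr[X \leq \mu/2] \leq \exp(-\mu/8) \leq \exp\bigl(-\tfrac{1}{16} 5^k \log_a n\bigr) = n^{-\frac{5^k}{16}\log_a e \cdot \text{(something)}}$; the point is that this is $n^{-\omega(k)}$, in particular smaller than $n^{-k-1}$ for $n$ large.

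Next I would take a union bound over all $W$: the probability that some $k$-set fails is at most $\binom{n}{k} \cdot \exp(-\mu/8) \leq n^k \cdot n^{-(k+1)} = n^{-1} \to 0$ as $n \to \infty$. Hence almost surely every $k$-set has at least $2k^2\log_a n$ common neighbours. The only mild subtlety is bookkeeping the constants: one must check that $c \geq 5$ and the specific base $a = 1 + \frac{1}{2^{2k-3}-1}$ make $\frac{1}{8}(n-k)p^k$ both exceed $2k^2\log_a n$ and dominate $k\ln n$, but since $\log_a n = \frac{\ln n}{\ln a}$ and $\ln a \approx 2^{-(2k-3)}$ is a fixed (exponentially small in $k$, but constant-in-$n$) quantity, $\log_a n = \Theta(\ln n)$ and the inequality $\binom{n}{k}\exp(-\mu/8) \to 0$ reduces to comparing two explicit polynomials in $\ln n$ against an exponential-in-$n$ decay — trivially true for $n$ large.

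**Main obstacle.** There is no real obstacle here; the argument is a textbook application of Chernoff plus a union bound. The only thing requiring care is to make sure the chosen Chernoff constant together with $c \geq 5$ beats the $\binom{n}{k}$ factor — i.e. that $\mu/8$ genuinely exceeds $(k+1)\ln n$ — and to carry the base-change from $\log_a$ to $\ln$ cleanly so the factor $2^{2k-3}$ hidden in $a$ does not spoil the estimate. A slightly cleaner alternative, if one wants to avoid Chernoff, is to bound $\Pr[X < t]$ directly by $\binom{n-k}{t}(1-p^k)^{n-k-t}$ and estimate $(1-p^k)^{n} \leq e^{-np^k} \leq e^{-c^k\log_a n}$, which lands at the same place; either route finishes the proof.
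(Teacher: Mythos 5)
Your proposal is correct and follows essentially the same route as the paper: the number of common neighbours of a fixed $k$-set is $\mathrm{Bin}(n-k,p^k)$ with mean roughly $c^k\log_a n$, a Chernoff lower-tail bound makes the failure probability of a single set $n^{-\omega(k)}$ (using $\log_a n>\ln n$ since $a<e$), and a union bound over the $\binom{n}{k}$ sets finishes. The only difference is cosmetic: the paper uses the relative-deviation form $\Pr[X<\mu(1-\delta)]\le e^{-\mu\delta^2/2}$ with an explicit $\delta$, whereas you use the $\Pr[X\le\mu/2]\le e^{-\mu/8}$ form.
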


\begin{proof} For a $k$-subset $S$ of $V(G_{n,p})$, let $C_S$ be the event that all the vertices in $S$ have at least $2k^2\log_a n$ common neighbours. It is sufficient to prove that for $p= c\sqrt[k]{\frac{\log_a n}{n}}$, $Pr[ \ \underset{S} \bigcap C_S\ ]\rightarrow1,n\rightarrow\infty$. Let $C_1$ be the number of common neighbours of all the vertices in $S$. Then $C_1\sim B\left(n-k,\left(c\sqrt[k]{\frac{\log_a n}{n}}\right)^k\right)$, and $E(C_1)=\frac{n-k}{n}c^k\log_a n$. In order to apply the Chernoff bound \cite{JLR} as follows, setting $n>\frac{kc^k}{c^k-2k^2}$.

By the Chernoff Bound, we obtain
\begin{align*}
Pr[\ \overline{C_S}\ ]&= Pr[\ C_1< 2k^2\log_a n\ ]\\
&=Pr[\ C_1<E(C_1)\left(1-\frac{E(C_1)-2k^2\log_a n}{E(C_1)}\right)\ ]\\
&=Pr[\ C_1<\frac{n-k}{n}c^k\log_a n\left(1-\frac{(n-k)c^k-2k^2n}{(n-k)c^k}\right)\ ]\\
&\leq e^{-\frac{n-k}{2n}c^k\log_a n\left(\frac{(n-k)c^k-2k^2n}{(n-k)c^k}\right)^2}\\
&<n^{-\frac{c^k(n-k)}{2n}\left(\frac{(n-k)c^k-2k^2n}{(n-k)c^k}\right)^2}.
\end{align*}
Since $1<a<e$, we have $\log_a n>\ln n$, this leads to the last inequality.

As an immediate consequence, we get
\begin{align*}
Pr[\ \underset{S} \bigcap C_S\ ]&=1-Pr[\ \underset{S}\bigcup\overline{C_S}\ ]\\
&\geq1-\underset{S} \sum Pr[\ \overline{C_S}\ ]\\
&>1-{n\choose k}n^{-\frac{c^k(n-k)}{2n}(\frac{(n-k)c^k-2k^2n}{(n-k)c^k})^2}\\
&>1-n^{k-\frac{c^k(n-k)}{2n}(\frac{(n-k)c^k-2k^2n}{(n-k)c^k})^2}.
\end{align*}
Note that for any $c\geq5$, $k-\frac{c^k(n-k)}{2n}(\frac{(n-k)c^k-2k^2n}{(n-k)c^k})^2<0$ holds for sufficiently large $n$. Thus, $\underset{n\rightarrow\infty}\lim Pr[\ \underset{S} \bigcap C_S\ ]=\underset{n\rightarrow\infty}\lim 1-n^{k-\frac{c^k(n-k)}{2n}(\frac{(n-k)c^k-2k^2n}{(n-k)c^k})^2}=1$.
\end{proof}

\begin{lem}\label{lem3}
Let $a=1+\frac{1}{2^{2k-3}-1}$. If any $k$ vertices in $G_{n,p}$ have at least $2k^2\log_a n$ common neighbours, then $px_{k,\ell}(G_{n,p})\le 2$ holds almost surely.
\end{lem}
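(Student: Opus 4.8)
The plan is to condition on the favorable event that every $k$ vertices have at least $2k^2\log_a n$ common neighbours, and then exhibit an explicit $2$-coloring of $G_{n,p}$ for which every $k$-set $S$ is joined by $\ell$ internally disjoint proper $S$-trees. As in the earlier theorems, since a proper tree using only two colors must be a path, it suffices to produce $\ell$ internally disjoint proper $S$-paths. First I would colour each edge of $G_{n,p}$ uniformly and independently with one of the two colours; the goal is to show that the probability that this fails for some $k$-set $S$ tends to $0$, given the common-neighbour hypothesis. Fix $S=\{v_1,\dots,v_k\}$ and let $W$ be the set of its common neighbours, so $|W|\ge 2k^2\log_a n$. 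For any injective choice of $k-1$ vertices $u_1,\dots,u_{k-1}\in W$, the path $P=v_1u_1v_2u_2\cdots v_{k-1}u_{k-1}v_k$ is an $S$-tree of length $2k-2$, and two such paths built from disjoint sets of interior vertices are internally disjoint. From $W$ we can thus extract $\lfloor |W|/(k-1)\rfloor \ge 2k\log_a n$ pairwise internally disjoint candidate $S$-paths.

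The key step is the probability estimate. Each candidate path $P_T$ is proper with probability exactly $p_0 = 2/2^{2k-2} = 1/2^{2k-3}$, and since the candidate paths in our extracted family use disjoint edge sets, these events are mutually independent. Letting $B_S$ be the event that fewer than $\ell$ of the (at least) $2k\log_a n$ candidate paths are proper, a union bound over the "bad" sub-families gives
\begin{equation*}
\Pr[B_S] \le \binom{2k\log_a n}{\ell-1}\,(1-p_0)^{2k\log_a n-(\ell-1)} < (2k\log_a n)^{\ell-1}\,(1-p_0)^{2k\log_a n - \ell+1}.
\end{equation*}
Now the base $a = 1+\frac{1}{2^{2k-3}-1}$ was chosen precisely so that $1-p_0 = 1 - \frac{1}{2^{2k-3}} = \frac{2^{2k-3}-1}{2^{2k-3}} = 1/a$, hence $(1-p_0)^{2k\log_a n} = a^{-2k\log_a n} = n^{-2k}$. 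Therefore $\Pr[B_S] < (2k\log_a n)^{\ell-1} n^{-2k} a^{\ell-1}$, which is at most $n^{-2k+o(1)}$.

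Finally I would take a union bound over all $\binom{n}{k} < n^k$ choices of $S$: the probability that some $k$-set fails is at most $n^k \cdot (2k\log_a n)^{\ell-1} a^{\ell-1} n^{-2k} = o(1)$ as $n\to\infty$. Consequently, conditioned on the common-neighbour event, with probability tending to $1$ the random $2$-coloring makes every $k$-set lie on $\ell$ internally disjoint proper paths, so $px_{k,\ell}(G_{n,p})\le 2$ almost surely; combining with Lemma \ref{lem2} this holds whenever $p \ge c\sqrt[k]{(\log_a n)/n}$ with $c\ge 5$. The only delicate point is bookkeeping: one must make sure the extracted family of candidate paths is genuinely internally disjoint and edge-disjoint (so independence is legitimate) and that $2k\log_a n > \ell-1$ for large $n$, both of which are immediate. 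I expect no serious obstacle here — the content is entirely in the choice of $a$, which converts the geometric decay into the power $n^{-2k}$ needed to beat the $\binom{n}{k}$ union bound, and that choice is already built into the statement.
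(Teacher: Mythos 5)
Your proposal is correct and follows essentially the same route as the paper: random $2$-coloring, extraction of $2k\log_a n$ internally disjoint candidate paths through the common neighbourhood, the observation that $1-1/2^{2k-3}=1/a$ so that the failure probability per $k$-set is roughly $n^{-2k}$, and a union bound over the $\binom{n}{k}<n^k$ sets. The only cosmetic difference is that you make the edge-disjointness/independence justification explicit where the paper leaves it implicit.
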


\begin{proof} Firstly, we color the edges of $G_{n,p}$ with two colors uniformly at random. For a $k$-subset $S$ of $V(G_{n,p})$, let $D_S$ be the event that there exist at least $\ell$ internally disjoint proper $S$-trees. Note that a proper tree using only two colors must be a path. If $Pr[\ \underset{S} \bigcap D_S\ ]>0$, then a suitable coloring of $G_{n,p}$ with 2 colors exists, which follows that $px_{k,\ell}(G_{n,p})\leq 2$.

We assume that $S=\{v_1,v_2,\ldots,v_k\}\subseteq V(K_n)$, let $N(S)$ be the set of common neighbours of all vertices in $S$. Let $T$ be any $(k-1)$-subset of $N(S)$, without loss of generality, suppose $T=\{u_1,u_2,\ldots,u_{k-1}\}$, define $P_T={v_1u_1v_2u_2\cdots v_{k-1}u_{k-1}v_k}$ as a path of length $2k-2$ from $v_1$ to $v_k$. Obviously, $P_T$ is an $S$-tree. Let $\mathcal{P}^\star=\{P_T \mid T \subseteq N(S)\}$, then $\mathcal{P}^\star$ has at least $\lfloor\frac{2k^2\log_a n}{k-1}\rfloor\geq 2k\log_a n$ internally disjoint $S$-trees (we may and will assume that $2k\log_a n$ is an integer). Take $\mathcal{P}^\star_1$
to be a set of $2k\log_a n$ internally disjoint $S$-trees of $\mathcal{P}^\star$. It is easy to check that $q=$ Pr[\ $P_T \in\mathcal{P}^\star_1$ is a proper $S$-tree\ ]$=\frac{2}{2^{2k-2}}=\frac{1}{2^{2k-3}}$. So $1-q=a^{-1}$. Let $D_1$ be the number of proper $S$-trees in $\mathcal{P}^\star_1$. Then we get
\begin{align*}
Pr[\ \overline{D_S}\ ]&\leq Pr[\ D_1\leq \ell-1\ ]\\
&\leq{2k\log_a n\choose 2k\log_a n-(\ell-1)}(1-q)^{2k\log_a n-(\ell-1)}\\
&={2k\log_a n\choose \ell-1}(1-q)^{2k\log_a n-(\ell-1)}\\
&< (2k\log_a n)^{\ell-1}a^{-(2k\log_a n-(\ell-1))}\\
&=\frac{(2ak\log_a n)^{\ell-1}}{n^{2k}}.
\end{align*}

Consequently
\begin{align*}
Pr[\ \underset{S} \bigcap D_S\ ]&=1-Pr[\ \bigcup\overline{D_S}\ ]\\
&\geq1-\underset{S} \sum Pr[\ \overline{D_S}\ ]\\
&\geq1-{n\choose k}\frac{(2ak\log_a n)^{\ell-1}}{n^{2k}}\\
&>1-\frac{(2ak\log_a n)^{\ell-1}}{n^{k}}.
\end{align*}
It is easy to verify that $\underset{n\rightarrow\infty}\lim 1-\frac{(2ak\log_a n)^{\ell-1}}{n^{k}}=1$, which implies that $\underset{n\rightarrow\infty}\lim Pr[\ \underset{S} \bigcap D_S\ ]=1$, this is to say that $px_{k,\ell}(G_{n,p})\leq 2$ holds almost surely. This completes the proof.
\end{proof}

Combining with Lemmas \ref{lem2} and \ref{lem3}, we get the following conclusion.

\begin{thm}\label{thm6}
Let $a=1+\frac{1}{2^{2k-3}-1}$ and $c \ge 5$. For every $p \ge c\sqrt[k]{\frac{\log_a n}{n}}$,  $px_{k,\ell}(G_{n,p})\le 2$ holds almost surely.
\end{thm}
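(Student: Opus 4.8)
The plan is to derive Theorem~\ref{thm6} as an immediate consequence of Lemmas~\ref{lem2} and~\ref{lem3}, so the proof is essentially a chaining argument together with a monotonicity observation. First I would fix $c \ge 5$ and $a = 1 + \frac{1}{2^{2k-3}-1}$, and take any $p \ge c\sqrt[k]{\frac{\log_a n}{n}}$. By Lemma~\ref{lem2}, when $p = c\sqrt[k]{\frac{\log_a n}{n}}$ exactly, almost surely every $k$-subset of $V(G_{n,p})$ has at least $2k^2\log_a n$ common neighbours. The point to address is that the hypothesis allows $p$ to be larger than this threshold; here I would invoke the standard coupling/monotonicity of $G_{n,p}$: the property ``every $k$ vertices have at least $2k^2\log_a n$ common neighbours'' is an increasing graph property, so if it holds almost surely at $p_0 = c\sqrt[k]{\frac{\log_a n}{n}}$, it holds almost surely for every $p \ge p_0$. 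Thus almost surely $G_{n,p}$ satisfies the hypothesis of Lemma~\ref{lem3}.

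Next I would apply Lemma~\ref{lem3}: conditioned on the common-neighbour property, $px_{k,\ell}(G_{n,p}) \le 2$ holds almost surely (the lemma's proof shows that a random $2$-colouring of the edges works with probability tending to $1$, using only the edges incident to common neighbours of each $S$). Combining the two ``almost surely'' statements — the structural one from Lemma~\ref{lem2} plus monotonicity, and the colouring one from Lemma~\ref{lem3} — via a union bound on the two failure events, both of which have probability $o(1)$, gives $\Pr[px_{k,\ell}(G_{n,p}) \le 2] \to 1$ as $n \to \infty$, which is the claim.

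One subtlety I would be careful about is that Lemma~\ref{lem3} is stated conditionally (``if any $k$ vertices have at least $2k^2\log_a n$ common neighbours, then \dots''), and its own proof introduces a second layer of randomness (the random edge-colouring). So the honest way to combine them is: let $\mathcal{E}_1$ be the structural event of Lemma~\ref{lem2} (a deterministic property of the sampled graph $G_{n,p}$) and note $\Pr[\mathcal{E}_1] \to 1$; then for graphs in $\mathcal{E}_1$ the computation in Lemma~\ref{lem3} shows a good $2$-colouring exists, hence $px_{k,\ell} \le 2$ for all such graphs; therefore $\Pr[px_{k,\ell}(G_{n,p}) \le 2] \ge \Pr[\mathcal{E}_1] \to 1$. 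I do not expect any real obstacle here — the only mild technical point is making the monotonicity step explicit so that the stated range $p \ge c\sqrt[k]{\frac{\log_a n}{n}}$, rather than equality, is genuinely covered, and keeping straight which probabilities refer to the random graph and which to the auxiliary random colouring inside Lemma~\ref{lem3}.
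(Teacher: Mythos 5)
Your proposal matches the paper exactly: the paper derives Theorem~\ref{thm6} simply by combining Lemmas~\ref{lem2} and~\ref{lem3}, which is precisely your chaining argument. Your added care about the monotonicity of the common-neighbour property for $p$ above the threshold, and about separating the randomness of the graph from that of the auxiliary colouring, only makes explicit what the paper leaves implicit.
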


\end{document}